\documentclass[12pt,a4paper,oneside,onecolumn]{article}
\usepackage{mathrsfs}
\usepackage{amsfonts}
\usepackage{latexsym}
\usepackage{amsmath,amsthm}
\usepackage{amssymb}
\usepackage{epsf}
\usepackage{graphicx}
\usepackage{indentfirst}
\usepackage{cite}
\usepackage{color}
\usepackage{diagbox}
\usepackage{booktabs}
\usepackage{multirow}
\usepackage{caption}

\theoremstyle{plain}
\newtheorem{thm}{\bf Theorem}[section]

\newtheorem{cor}[thm]{\bf Corollary}

\newtheorem{lem}[thm]{\bf Lemma}

\newcommand{\supp}{\mathrm{supp}}

\theoremstyle{definition}
\newtheorem{defi}{\bf Definition}
\newtheorem{exam}{\bf Example}

\theoremstyle{remark}
\newtheorem{rmk}{\bf Remark}

\title{\bf  Improved Bounds for \\Nested Orthogonal Arrays}
\author{Xiaodong Niu$^{1}$,  Guangzhou Chen$^{2}$, Zihong Tian$^{1}$, Jianguo Lei$^{1*}$\\
\small1. {\it School of Mathematical Sciences, Hebei Normal University, Shijiazhuang, 050024, P.R.China}\\
\small 2.  {\it School of Mathematics and Statistics, Henan Normal University, Xinxiang, 453007, P.R.China }\\
}
\date{}
\begin{document}
\maketitle

\begin{center}
\center  {\bf Abstract}
\begin{minipage}{16cm}
{

\vspace{0.3cm} \ \ \ \  Nested orthogonal arrays (NOAs) have found increasing application in various experimental design problems. A central challenge in this field is the derivation of lower bounds on the number of runs. These bounds serve as a powerful criterion to prove the nonexistence of specific arrays. For symmetric NOAs, Mukerjee, Qian, and Wu developed statistical arguments that yield fairly tight bounds. By contrast, the bounds for asymmetric NOAs proposed by Lin, Pang and Chen, which are obtained via a recursive column deletion technique that reduces the general problem to NOAs of strength 2, are not optimal. Consequently, improving these bounds remains a significant open problem.

\quad \quad In this paper, we reformulate the Rao bound and establish a new bound for NOAs under a group-theoretic framework. Using the character theory of finite abelian groups, we obtain an equivalent characterization via group characters. This framework allows us to give a new proof of Rao's bound for orthogonal arrays and to derive significantly sharper lower bounds for asymmetric NOAs than those of Lin et al. In the case where all factor levels are equal, our bounds reduce naturally to the symmetric bounds of Mukerjee, Qian, and Wu. We also confirm their optimality by explicitly two  constructions of NOAs that achieve these bounds.\vskip 0.1cm

{\textit{Keywords:}} Orthogonal array; Nested orthogonal array; Bound; Character; Fourier transform

}
\end{minipage}
\end{center}

\vskip 0.5cm

 {  \begingroup\makeatletter  \let\@makefnmark\relax  \footnotetext{%
  \scriptsize\parbox{\linewidth}{%
    * Corresponding author: leijg1964@hebtu.edu.cn (J. Lei)
}%
}\makeatother\endgroup}

\section{Introduction}

Orthogonal array is a combinatorial configuration introduced firstly by Rao \cite{Rao1947} in 1947. It is one of important topics in combinatorial design theory \cite{Colbourn2007,Stinson2004} and experimental design theory \cite{Dey,Fang2006,Taguchi1987}. Orthogonal arrays have found widespread applications in various fields such as fractional factorial experiments\cite{Hedayat1999,Cheng1980,Mukerjee}, computer experiments\cite{Owen}, survey sampling\cite{McCarthy}, computer science\cite{Fang2006,lin10,Williams2001}, coding theory\cite{Bierbrauer2005,Hedayat1999}, cryptography\cite{Carlet,Dong-Pei2008,Pei2009}, quantum information\cite{Goyeneche2016}, and wireless sensor networks\cite{Du}.
Orthogonal arrays of strength $2$ have been extensively studied, constructions and related results can be found in \cite{Bose1952,Chen,Hedayat1999,Colbourn2007}. For orthogonal arrays of strength 3 and higher, we refer the reader to \cite{Brouwer2006,Cao2024,chen17,chen23,Hedayat96,Ji10,Nguyen,Niu2025,Pang,yin11}.

In many experimental contexts, a single orthogonal array is insufficient, since sequential experimental stages often differ in precision, computational cost, and resource requirements. Orthogonal arrays, known for space-filling and projection properties, serve as standard tools for physical experiments and multi-fidelity numerical simulations. These practical demands motivate the development of \emph{nested orthogonal arrays}, introduced by Mukerjee et al. \cite{MQW} as a natural generalization of classical orthogonal arrays.

A nested orthogonal array (NOA) is a hierarchical combinatorial design consisting of two orthogonal arrays, in which a smaller, high-precision subarray is embedded within a larger, low-cost parent array. This nested structure enables economical collaborative sampling across multiple accuracy levels \cite{Pang2024}. NOAs are particularly valuable for computer experiments \cite{Dey}, where multi-fidelity simulations with varying in both accuracy and computational costs have become increasingly essential\cite{Qian2009}, driven by the prohibitive expense of physical experiments\cite{Qian2009-Ann}. By providing a robust framework for constructing nested space-filling designs, NOAs allow high-accuracy experiments to be nested within larger, more affordable ones, facilitating substantial progress in the modeling and analysis of such hierarchical data \cite{Kennedy2000,Qian2006,Qian2006-Bi,Pang2025}.

Bounds for symmetric nested orthogonal arrays were established by Mukerjee et al. \cite{MQW}, while Lin et al.\cite{Lin} provided bounds for the asymmetric case.
Since then, an increasing number of scholars have turned their attention to the existence of NOAs and have studied NOAs. Based on finite field theory, Dey \cite{Dey1,Dey2} developed a series of systematic construction methods and enriched the family of feasible symmetric NOAs.  Qian and Ai \cite{Qian2010-JASA} constructed symmetric NOAs using nested difference matrices, incomplete pairwise orthogonal Latin squares, and the Bush construction. Wang and Li, together with Wang and Yin \cite{Wang-Li2013,Wang-Yin2013}, presented symmetric NOAs with fewer than seven levels via classical methods of combinatorial design. Qian et al. \cite{Qian2014,Qian2009-Ann,Qian2009} introduced level-folding and replacement techniques to generate asymmetric NOAs of strength $2$ with prime power levels. Zhang et al. \cite{Zhang2018,Zhang2019} employed generator matrices to construct several new infinite families of NOAs. Recently, Pang and Zhu \cite{Pang2024,Pang2024-SPL} used orthogonal Latin squares to construct a series of new infinite families of NOAs with adjacent levels. Pang et al. \cite{Pang2025} propose several general methods for constructing asymmetric NOAs with flexible run sizes, number of levels, and strengths. These methods can generate numerous new classes of NOAs, achieving the maximal number of factors with the minimal run size for both the smaller and larger arrays. For more other results on NOAs with special properties, we refer the readers to \cite{Tsai2017,Zheng2024}.

Although extensive research on NOAs has been conducted, the available bounds remain far from satisfactory, especially those derived by Lin et al. \cite{Lin} for the asymmetric NOAs. In this paper, we develop a character-theoretic framework for nested orthogonal arrays that not only elucidates their fundamental structure but also sharpens existing bounds. This framework allows us to provide a new proof of the Rao's bound for orthogonal arrays in \cite{Hedayat1999} using group character theory. Another contribution of this paper is the improvement of the bounds established in \cite{Lin} for asymmetric nested orthogonal arrays, and when the array becomes symmetric, our bounds coincide with those from \cite{MQW}.

The rest of the paper is organized as follows. Section 2 establishes the group-theoretic framework for orthogonal arrays and introduces the character theory of finite abelian groups. Section 3 provides a new proof of Rao's bound using character theory. Section 4 derives improved lower bounds for nested orthogonal arrays. Section 5 concludes the paper with brief remarks.

\section{Preliminaries}

In this section, we develop a group-theoretic framework for orthogonal arrays.

Let $\mathbb{Z}_+$ denote the set of all positive integers. The notation $|\cdot|$ denotes the cardinality of a set (or multiset) and the modulus of a complex number, the context will disambiguate the usage. Specifically, for a complex number $a = x + iy$ with $x, y \in \mathbb{R}$, its modulus is defined as $|a| = \sqrt{x^2 + y^2}$.
Throughout the paper, we fix the abelian group $G = \mathbb{Z}_{s_1}\times\mathbb{Z}_{s_2}\times\cdots\times\mathbb{Z}_{s_k}$, where each $\mathbb{Z}_{s_i}=\{0,1,\dots,s_i-1\}$ is the additive group modulo $s_i$ for $1\leq i\leq k$. When \(s_1 = s_2 = \cdots = s_k = s\), we write \(G = \mathbb{Z}_s^k\). For any index set $I=\{i_1,i_2,\dots,i_t\}\subseteq\{1,2,\dots,k\}$, let $G_I=\mathbb{Z}_{s_{i_1}}\times\mathbb{Z}_{s_{i_2}}\times\cdots\times\mathbb{Z}_{s_{i_t}}$, and define
\[
\pi_I : G \longrightarrow G_I, \ (g_1,g_2,\ldots,g_k)\longmapsto(g_{i_1},g_{i_2},\ldots,g_{i_t}).
\]

For a multiset $A$, let $set(A)$ denote the set of its distinct elements of $A$. If $A$ is an array of size $|A|\times k$, we always assume that $set(A)$ consists of its rows and $set(A)\subseteq G$, that is, each row of $A$ is one element of $G$. The projection of $A$ onto the coordinates indexed by $I$, denoted by $\pi_I(A)$, is the multiset where each element is taken from $G_I$ obtained by the mapping
\[
\pi_I|_A : A \longrightarrow G_I, \ (a_1,a_2,\ldots,a_k)\longmapsto(a_{i_1},a_{i_2},\ldots,a_{i_t}).
\]
For any $\beta \in G_I$, the multiplicity of $\beta$ in $\pi_I(A)$ is denoted by $$\lambda_I(\beta) = |\{ \alpha \in A : \pi_I(\alpha) = \beta \}|.$$
The following example illustrates the relationship between a multiset $A$
and its projections.

\begin{exam}
Let $G = \mathbb{Z}_2 \times \mathbb{Z}_3 \times \mathbb{Z}_4$ and consider the multiset $A = \{000, 000, 110, 110, 102\}$. Then $set(A)=\{000, 110, 102\}\subseteq G$.

For $I = \{1, 2\}$, the coordinate subgroup is $G_I = \mathbb{Z}_2 \times \mathbb{Z}_3$, and the projection multiset is $\pi_I(A) = \{00, 00,11,11, 10\}$. The corresponding multiplicities are $\lambda_I(00) = 2$, $\lambda_I(11) = 2$ and $\lambda_I(10) = 1$.
\end{exam}

\begin{defi}
Let $A$ be a finite multiset satisfying \(set(A)\subseteq G\) and \(t\in\mathbb{Z}_+\) with \(1\le t\le k\).
We call \(A\) an \emph{orthogonal array of strength \(t\)}, denoted by
OA\((|A|,s_1s_2\cdots s_k,t)\), provided that for each index subset
\(I=\{i_1,i_2,\dots,i_t\}\subseteq\{1,2,\dots,k\}\), the restricted projection
\(\pi_I|_A:A\longrightarrow G_I\) is \emph{uniform}. Equivalently, there exists an integer
\(\lambda_I\in\mathbb{Z}_{+}\) determined uniquely by \(I\) satisfying
\[
|\{\,\alpha\in A:\ \pi_I(\alpha)=\beta\,\}|=\lambda_I
\quad\text{for all } \beta\in G_I.
\]
\end{defi}

A direct verification gives
\[
\lambda_I=\frac{|A|}{|G_I|}=\frac{|A|}{\prod_{j\in I}s_j}.
\]
This immediately implies that \(\prod_{j\in I}s_j\) divides \(|A|\) for every index subset \(I\). In an OA$(|A|, s_1s_2 \\\cdots s_k, t)$, the rows are referred to as
\emph{runs} and $|A|$ is the \emph{size}. Each $\mathbb{Z}_{s_i} (1\leq i\leq k) $ is a
\emph{factor}, so $k$ is the total number of factors, and $s_i$ is the \emph{level} of the $i$-th factor. The parameter $t$ is the \emph{strength} of the OA.
When $s_1=\cdots=s_k=s$, the array is called \emph{symmetric} and denoted by
OA$(|A|,s,k,t)$; otherwise, it is \emph{asymmetric} or \emph{mixed}.

To characterize orthogonal arrays via group characters, we now introduce the character theory of the finite abelian groups. Let $G$ be an abelian group of order $n$. A \textit{character} $\chi$ of $G$ is a group homomorphism from $G$ to the multiplicative group of nonzero complex numbers $\mathbb{C}^*$, that is, a mapping $\chi: G \longrightarrow \mathbb{C}^*$ satisfying
\begin{center}
$\chi(g_1 g_2) = \chi(g_1) \chi(g_2)$
\end{center}
for all $g_1, g_2 \in G$. Similarly, we may define the restriction of a character $\chi|_I: G_I \longrightarrow \mathbb{C}^*$.

The following lemma is established in the monograph \emph{Fourier Analysis on Finite Groups and Applications} \cite{Terras1999}.

\begin{lem}[\!\cite{Terras1999}]\label{char}
Let $G = \mathbb{Z}_{s_1} \times \mathbb{Z}_{s_2} \times \cdots \times \mathbb{Z}_{s_k}$ be a finite abelian group. For any fixed element $\mathbf{m} = (m_1,m_2, \dots, m_k) \in G$, the mapping $\chi_{\mathbf{m}} : G \to \mathbb{C}^*$ defined by
\begin{center}$\chi_{\mathbf{m}}(\mathbf{x}) = \prod_{j=1}^k e^{ 2\pi i m_j x_j/s_j }, \ \ \mathbf{x} = (x_1,x_2,\dots,x_k) \in G,$\end{center}
is a character of $G$.
\end{lem}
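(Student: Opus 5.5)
The plan is to check the two requirements for $\chi_{\mathbf{m}}$ to be a character: it must be a well-defined map $G \to \mathbb{C}^*$, and it must be multiplicative. Since $G$ is written additively, the homomorphism condition reads $\chi_{\mathbf{m}}(\mathbf{x}+\mathbf{y})=\chi_{\mathbf{m}}(\mathbf{x})\chi_{\mathbf{m}}(\mathbf{y})$, with addition taken componentwise modulo $s_j$.

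\textbf{Well-definedness.} This is the only step that needs real care, since the operation in each coordinate is reduction mod $s_j$.

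\begin{itemize}
\item The expression $e^{2\pi i m_j x_j/s_j}$ uses the integer representatives $m_j, x_j \in \{0,\dots,s_j-1\}$.
\item Replace $x_j$ by $x_j+\ell s_j$ for some $\ell\in\mathbb{Z}$. The exponent changes by $2\pi i m_j \ell$, which is an integer multiple of $2\pi i$, so the value is unchanged.
\item The same argument applies if $m_j$ is replaced by $m_j+\ell s_j$.
\item Hence each factor $x_j\mapsto e^{2\pi i m_j x_j/s_j}$ depends only on the residue class of $x_j$ mod $s_j$.
\end{itemize}

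It also takes values in $\mathbb{C}^*$, because $|e^{i\theta}|=1\neq 0$. So $\chi_{\mathbf{m}}$ is a product of well-defined nonzero complex numbers.

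\textbf{Multiplicativity.} Let $\mathbf{x},\mathbf{y}\in G$ and write $(x_j+y_j) \bmod s_j = x_j+y_j-\varepsilon_j s_j$ with $\varepsilon_j\in\{0,1\}$. By the well-definedness step,
\[
e^{2\pi i m_j ((x_j+y_j)\bmod s_j)/s_j}=e^{2\pi i m_j (x_j+y_j)/s_j}=e^{2\pi i m_j x_j/s_j}\,e^{2\pi i m_j y_j/s_j},
\]
where the last equality is the exponential law $e^{a+b}=e^ae^b$. Taking the product over $j=1,\dots,k$ and rearranging the commuting complex factors gives $\chi_{\mathbf{m}}(\mathbf{x}+\mathbf{y})=\chi_{\mathbf{m}}(\mathbf{x})\chi_{\mathbf{m}}(\mathbf{y})$.

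Alternatively, $\chi_{\mathbf{m}}$ can be viewed as the composition of the projections $G\to\mathbb{Z}_{s_j}$ with the one-dimensional characters $x\mapsto \zeta_{s_j}^{m_j x}$, where $\zeta_{s_j}=e^{2\pi i/s_j}$. Each of these is a homomorphism because $\zeta_{s_j}^{s_j}=1$, and a pointwise product of homomorphisms into the abelian group $\mathbb{C}^*$ is again a homomorphism.
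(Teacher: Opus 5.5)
Your proof is correct and complete. The paper gives no argument of its own here and simply cites Terras; your direct check is the standard verification behind that citation: each factor is unchanged when $x_j$ is shifted by a multiple of $s_j$, so the mod-$s_j$ reduction in $\mathbf{x}+\mathbf{y}$ is harmless and the exponential law gives $\chi_{\mathbf{m}}(\mathbf{x}+\mathbf{y})=\chi_{\mathbf{m}}(\mathbf{x})\chi_{\mathbf{m}}(\mathbf{y})$.
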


%

Let $\widehat{G}$ denote the set of all characters of $G$. For any two characters $\chi_{\mathbf{m}}, \chi_{\mathbf{n}} \in \widehat{G}$, define \emph{their product} $\chi_{\mathbf{m}} \chi_{\mathbf{n}} : G \longrightarrow \mathbb{C}^*$ by
\begin{center}
$\chi_{\mathbf{m}} \chi_{\mathbf{n}}(\mathbf{x}) = \chi_{\mathbf{m}}(\mathbf{x}) \chi_{\mathbf{n}}(\mathbf{x})$,\ \ $\mathbf{x} \in G$.
\end{center}
It is straightforward to verify that $\widehat{G}$ forms an abelian group under this operation. Moreover, we have $\chi_{\mathbf{m}} \chi_{\mathbf{n}} = \chi_{\mathbf{m}+\mathbf{n}}$. The identity element of $\widehat{G}$ is the trivial character $\chi_{\mathbf{0}}$, and the inverse of $\chi_{\mathbf{m}}$ is its complex conjugate, which satisfies $\overline{\chi_{\mathbf{m}}} =(\chi_{\mathbf{m}})^{-1}= \chi_{-\mathbf{m}}$. The group $\widehat{G}$ is called \emph{the character group} of $G$, or \emph{the dual group} of $G$. We have the following result.

\begin{lem}[\!\cite{Terras1999}]\label{char2}
$G$ is isomorphic to $\widehat{G}.$
\end{lem}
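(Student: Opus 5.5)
The plan is to exhibit the map $\Phi: G \to \widehat{G}$, $\mathbf{m} \mapsto \chi_{\mathbf{m}}$, supplied by Lemma~\ref{char}, and check that it is a group isomorphism. By Lemma~\ref{char}, each $\chi_{\mathbf{m}}$ really is a character, so $\Phi$ is well defined.

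\emph{Homomorphism.} This is the identity $\chi_{\mathbf{m}}\chi_{\mathbf{n}}=\chi_{\mathbf{m}+\mathbf{n}}$ already noted above. Coordinatewise it reads $e^{2\pi i m_jx_j/s_j}e^{2\pi i n_jx_j/s_j}=e^{2\pi i (m_j+n_j)x_j/s_j}$. One point needs care: $m_j+n_j$ is taken mod $s_j$. This is harmless, because $e^{2\pi i a x/s}$ depends only on $a$ modulo $s$.

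\emph{Injectivity.} Suppose $\chi_{\mathbf{m}}=\chi_{\mathbf{0}}$. Evaluate at the standard generator $\mathbf{e}_j=(0,\dots,1,\dots,0)$ to get $e^{2\pi i m_j/s_j}=1$. With $0\le m_j<s_j$ this forces $m_j=0$. Hence the kernel is trivial.

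\emph{Surjectivity.} This is the main step. Let $\chi\in\widehat{G}$ be arbitrary. Since $s_j\mathbf{e}_j=\mathbf{0}$, we have $\chi(\mathbf{e}_j)^{s_j}=\chi(\mathbf{0})=1$. So $\chi(\mathbf{e}_j)$ is an $s_j$-th root of unity, and we can write $\chi(\mathbf{e}_j)=e^{2\pi i m_j/s_j}$ for a unique $m_j\in\mathbb{Z}_{s_j}$. Every $\mathbf{x}\in G$ is the sum $\sum_j x_j\mathbf{e}_j$. By multiplicativity,
\[
\chi(\mathbf{x})=\prod_{j=1}^k\chi(\mathbf{e}_j)^{x_j}=\chi_{\mathbf{m}}(\mathbf{x}),
\]
so $\chi=\chi_{\mathbf{m}}$. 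The only subtlety is that characters take values in $\mathbb{C}^*$, and finiteness of the order of $\mathbf{e}_j$ forces these values to be roots of unity. This also shows $|\widehat{G}|=|G|$.

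Combining the three parts, $\Phi$ is a bijective homomorphism, hence $G\cong\widehat{G}$.
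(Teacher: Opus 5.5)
The paper gives no proof of this lemma; it cites Terras's monograph after introducing $\mathbf{m}\mapsto\chi_{\mathbf{m}}$ and the rule $\chi_{\mathbf{m}}\chi_{\mathbf{n}}=\chi_{\mathbf{m}+\mathbf{n}}$. Your argument (homomorphism, trivial kernel by evaluating at $\mathbf{e}_j$, and surjectivity because each $\chi(\mathbf{e}_j)$ is an $s_j$-th root of unity) is the standard proof behind that citation, and it is correct and complete; your surjectivity step is also what justifies the paper's later indexing of all of $\widehat{G}$ by $\mathbf{m}\in G$, e.g.\ in the Fourier inversion formula.
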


For two complex-valued functions $f$, $g: G \longrightarrow \mathbb{C}$, \emph{their inner product} is defined as
\begin{center}
$\langle f, g \rangle = \frac{1}{|G|} \sum\limits_{\mathbf{x} \in G} f(\mathbf{x}) \overline{g(\mathbf{x})}.$
\end{center}

\begin{defi}[Fourier Transform \cite{Terras1999}]\label{def:DFT}
Let $G = \mathbb{Z}_{s_1} \times \mathbb{Z}_{s_2} \times\cdots \times \mathbb{Z}_{s_k}$ be a finite abelian group. For each $\mathbf{m}=(m_1,m_2,\ldots,m_k) \in G$, $\chi_{\mathbf{m}}\in \widehat{G}$. Let $f$ be a complex-valued function. \emph{The Fourier transform} of $f$ is defined by
$$\mathcal{F}f(\chi_\mathbf{m})=\widehat{f}(\chi_\mathbf{m})=\langle f, \chi_\mathbf{m} \rangle= \frac{1}{|G|} \sum_{\mathbf{x} \in G} f(\mathbf{x}) \overline{\chi_{\mathbf{m}}(\mathbf{x})}.$$
The corresponding \emph{Fourier inversion formula} is given by
$$
f(\mathbf{x}) =\sum_{\chi_\mathbf{m} \in \widehat{G}} \widehat{f}(\chi_\mathbf{m}) \chi_{\mathbf{m}}(\mathbf{x})= \sum_{\mathbf{m} \in G} \widehat{f}(\chi_\mathbf{m}) \chi_{\mathbf{m}}(\mathbf{x}).$$
\end{defi}
\begin{lem}[\!\cite{Terras1999}]\label{OR}
Let $G$ be a finite abelian group, and  $\chi, \psi \in \widehat{G}$. Then
\[
\langle \chi, \psi \rangle = \frac{1}{|G|}\sum_{g \in G} \chi(g) \overline{\psi(g)} =
\begin{cases}
1, & \text{if } \chi = \psi, \\
0, & \text{otherwise},
\end{cases} \ \
and \ \
\sum_{g \in G} \chi(g) = \begin{cases}
|G|, & \text{if } \chi \ \text{is trivial}, \\
0, & \text{otherwise}.
\end{cases}
\]
\end{lem}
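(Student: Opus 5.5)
The plan is to prove the two identities of Lemma~\ref{OR} by hand for $G=\mathbb{Z}_{s_1}\times\cdots\times\mathbb{Z}_{s_k}$. By Lemma~\ref{char2} and the discussion before it, every character has the form $\chi_{\mathbf{m}}$ for some $\mathbf{m}\in G$. We first prove the second identity, the sum $\sum_{g\in G}\chi(g)$, and then deduce the orthogonality relation from it.

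\textbf{Step 1: the sum of a character.} If $\chi$ is trivial, every value equals $1$ and the sum is $|G|$. If $\chi$ is nontrivial, choose $h\in G$ with $\chi(h)\neq 1$. Since $g\mapsto g+h$ is a bijection of $G$,
\[
S=\sum_{g\in G}\chi(g)=\sum_{g\in G}\chi(g+h)=\chi(h)\,S ,
\]
so $(1-\chi(h))S=0$ and hence $S=0$. For the concrete $\chi_{\mathbf{m}}$ one can instead factor
\[
\sum_{\mathbf{x}\in G}\chi_{\mathbf{m}}(\mathbf{x})=\prod_{j=1}^k\sum_{x_j=0}^{s_j-1}e^{2\pi i m_jx_j/s_j},
\]
where each factor is a geometric sum. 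It equals $0$ whenever $m_j\neq 0$ in $\mathbb{Z}_{s_j}$, because then $e^{2\pi i m_j/s_j}\neq 1$ is an $s_j$-th root of unity.

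\textbf{Step 2: orthogonality.} Every value of a character is a root of unity, because $g$ has finite order. So $|\chi(g)|=1$ and $\overline{\psi(g)}=\psi(g)^{-1}=\psi^{-1}(g)$. Therefore $\chi(g)\overline{\psi(g)}=(\chi\psi^{-1})(g)$, and $\chi\psi^{-1}$ is again a character, since $\widehat{G}$ is a group. Applying Step~1 to $\chi\psi^{-1}$, which is trivial exactly when $\chi=\psi$, gives $\frac{1}{|G|}\sum_g\chi(g)\overline{\psi(g)}=1$ when $\chi=\psi$ and $0$ otherwise.

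\textbf{Main obstacle.} The only delicate point is justifying $\overline{\chi(g)}=\chi(g)^{-1}$ for an arbitrary character, not just for the explicit ones $\chi_{\mathbf{m}}$. This follows from $\chi(g)^{|G|}=\chi(|G|g)=\chi(0)=1$, using Lagrange's theorem. Alternatively, one can invoke Lemma~\ref{char2}, under which $\widehat{G}=\{\chi_{\mathbf{m}}\}$, and read the identity off directly from the exponential formula.
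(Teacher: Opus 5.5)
Your argument is correct and is the standard one: translation invariance gives $S=\chi(h)S$ for a nontrivial character, and orthogonality then follows by applying this to $\chi\psi^{-1}=\chi\overline{\psi}$. The paper gives no proof of its own and simply cites Terras, where essentially this same argument appears; note also that your main line never needs the explicit form $\chi_{\mathbf{m}}$, so it covers an arbitrary finite abelian group, as the lemma states.
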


For $\mathbf m=(m_1,m_2,\dots,m_k)\in G$,
\emph{the support} of $\mathbf m$ is defined as
\begin{center}
\(\supp(\mathbf m)=\{i: m_i\neq 0, 1\leq i \leq k\}.\)
\end{center}
 For example,
let $G = \mathbb{Z}_2 \times \mathbb{Z}_2 \times \mathbb{Z}_3$ and
$\mathbf{m} = (1, 0, 2)$, then $\supp(\mathbf{m}) = \{1, 3\}$. The element
$\mathbf{m} = (0,0,0)$ has empty support, i.e., $\supp(\mathbf{m}) =\emptyset$.

We now present a characterization of orthogonal arrays in terms of group characters.
\begin{lem}\label{oa}
Let \(G = \mathbb{Z}_{s_1}\times \mathbb{Z}_{s_2}\times\cdots \times \mathbb{Z}_{s_k}\).
Then \(A\) is an OA$(|A|,s_1s_2\dots s_k,t)$ over $G$ if and only if
\[\sum_{\mathbf{x}\in A}\chi_{\mathbf{m}}(\mathbf{x})=0\]
for all nonzero elements $\mathbf{m}\in G$ with $|\supp(\mathbf{m})|\le t$, where $\mathbf{x}\in A$ means that $\mathbf{x}$ is a row of $A$.
\end{lem}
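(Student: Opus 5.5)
The plan is to encode the projection multiplicities as a function on $G_I$ and apply Fourier analysis on $G_I$, since a character $\chi_{\mathbf m}$ with $\supp(\mathbf m)\subseteq I$ factors through $\pi_I$. Concretely, for $\mathbf m\in G$ with $\supp(\mathbf m)\subseteq I$, let $\mathbf m_I=\pi_I(\mathbf m)$. Then $\chi_{\mathbf m}(\mathbf x)=\chi_{\mathbf m_I}(\pi_I(\mathbf x))$, where $\chi_{\mathbf m_I}$ is the character of $G_I$ from Lemma \ref{char}. Grouping the rows of $A$ by their projection gives the key identity
\[
\sum_{\mathbf x\in A}\chi_{\mathbf m}(\mathbf x)=\sum_{\beta\in G_I}\lambda_I(\beta)\,\chi_{\mathbf m_I}(\beta)=|G_I|\,\overline{\widehat{\lambda_I}(\chi_{\mathbf m_I})}
\]
when $\lambda_I$ is real-valued; equivalently, it equals $|G_I|\,\widehat{\lambda_I}(\chi_{-\mathbf m_I})$. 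Every $\mathbf m$ with $|\supp(\mathbf m)|\le t$ has support contained in some $t$-subset $I$, since $t\le k$. Conversely, the elements $\mathbf m\in G$ with $\supp(\mathbf m)\subseteq I$ correspond bijectively via $\pi_I$ to the elements of $G_I$.

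For the forward direction, suppose $A$ is an OA of strength $t$ and take $\mathbf m\neq\mathbf 0$ with $|\supp(\mathbf m)|\le t$. Choose a $t$-set $I\supseteq\supp(\mathbf m)$. Then $\lambda_I(\beta)=\lambda_I$ is constant, so the sum equals $\lambda_I\sum_{\beta\in G_I}\chi_{\mathbf m_I}(\beta)$. Because $\mathbf m_I\neq\mathbf 0$, the character $\chi_{\mathbf m_I}$ is nontrivial, and Lemma \ref{OR} shows the sum is $0$.

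For the converse, fix a $t$-subset $I$. By hypothesis, $\sum_{\beta}\lambda_I(\beta)\chi_{\mathbf n}(\beta)=0$ for every nonzero $\mathbf n\in G_I$, because $\mathbf n$ lifts to $\mathbf m\in G$ with support in $I$. Hence all nontrivial Fourier coefficients of $\lambda_I$ vanish: $\widehat{\lambda_I}(\chi_{\mathbf n})=0$ for $\mathbf n\neq\mathbf 0$, using that $-\mathbf n$ ranges over the nonzero elements as well. The Fourier inversion formula of Definition \ref{def:DFT} then gives $\lambda_I(\beta)=\widehat{\lambda_I}(\chi_{\mathbf 0})=|A|/|G_I|$ for all $\beta$. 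So the projection is uniform. The constant is a positive integer, being a multiplicity that is equal for every $\beta$ and sums to $|A|\ge1$.

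The only delicate points are bookkeeping. One is verifying the factorization $\chi_{\mathbf m}=\chi_{\mathbf m_I}\circ\pi_I$ directly from the product formula in Lemma \ref{char}: the factors with $m_j=0$ equal $1$. The other is the lifting correspondence between nonzero elements of $G_I$ and nonzero $\mathbf m\in G$ supported in $I$. Neither should be a real obstacle.
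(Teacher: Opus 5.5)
Your proof is correct and follows essentially the same route as the paper: you factor $\chi_{\mathbf m}$ through $\pi_I$ and use character orthogonality for the forward direction, and for the converse you apply Fourier inversion on $G_I$. Two details differ slightly. In the converse you invert $\lambda_I$ itself, while the paper inverts the point indicator $f_{\mathbf y}$, but this is the same computation. In the forward direction, your choice of a $t$-set $I\supseteq\supp(\mathbf m)$ is slightly cleaner than the paper's $I=\supp(\mathbf m)$, which tacitly assumes that uniformity on $t$-sets implies uniformity on smaller index sets.
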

\begin{proof}
Suppose $A$ is an OA$(|A|,s_1s_2\dots s_k,t)$ over $G$. Let $\mathbf{m} = (m_1,m_2, \dots, m_k) \in G\setminus \{\mathbf{0}\}$, set $\text{supp}(\mathbf{m})=I \subseteq \{1,2,\dots,k\}$ and denote the coordinate projection $\pi_I: G \longrightarrow G_I$. Since $m_j = 0$ for every index $j \notin I$, we have
\begin{center}
$\chi_{\mathbf{m}}(\mathbf{x}) = \chi_{\mathbf{m'}}(\pi_I(\mathbf{x}))$
\end{center}
where $\mathbf{m'}=\pi_I(\mathbf{m})$.
We now partition the rows of $A$ according to their images under $\pi_I$. This yields
$$\sum_{\mathbf{x} \in A} \chi_{\mathbf{m}}(\mathbf{x}) = \sum_{\mathbf{y} \in G_I} \sum_{\mathbf{x} \in A_{\mathbf{y}}} \chi_{\mathbf{m'}}(\mathbf{y}) = \sum_{\mathbf{y} \in G_I} |A_{\mathbf{y}}| \chi_{\mathbf{m'}}(\mathbf{y}),$$ where $A_{\mathbf{y}} = \{ \mathbf{x} \in A : \pi_I(\mathbf{x}) = \mathbf{y} \}$.
By the definition of an orthogonal array of strength $t$, the projection $\pi_I$ is uniform when $|I| \le t$. Hence we only consider nonzero $\mathbf{m}$ satisfying $1 \le |\text{supp}(\mathbf{m})| \le t$. In this case, the uniformity condition gives $|A_{\mathbf{y}}| = \frac{|A|}{|G_I|}$ for all $\mathbf{y} \in G_I$.
Therefore, we obtain
$$\sum_{\mathbf{x} \in A} \chi_{\mathbf{m}}(\mathbf{x}) = \frac{|A|}{|G_I|} \sum_{\mathbf{y} \in G_I} \chi_{\mathbf{m'}}(\mathbf{y}).$$
Since $\mathbf{m'}=\pi_I(\mathbf{m})\neq \mathbf{0}$ followed from $\mathbf{m} \neq \mathbf{0}$, the character $\chi_{\mathbf{m'}}$ is non-trivial. By Lemma \ref{OR}, it follows that
$$\sum_{\mathbf{y} \in G_I} \chi_{\mathbf{m'}}(\mathbf{y}) = 0.$$
Consequently, the character sum over $A$ vanishes.

Conversely, assume that for every nonzero vector $\mathbf{m}$ with $0 < |\text{supp}(\mathbf{m})| \le t$, the character sum satisfies
$$\sum_{\mathbf{x} \in A} \chi_{\mathbf{m}}(\mathbf{x}) = 0.$$
It is enough to show that for any index set $I\subseteq\{1,2,\dots,k\}$ with$|I| = t$, the restricted projection \(\pi_I|_A:A\longrightarrow G_I\) is uniform; that is, there exists a constant
\(\lambda_I\in\mathbb{Z}_{+}\) (depending only on \(I\)) such that
\[|\{\,\mathbf{x}\in A:\ \pi_I(\mathbf{x})=\mathbf{y}\,\}|=\lambda_I
\quad\text{for every } \mathbf{y}\in G_I.\]
For an arbitrary $\mathbf{y} \in G_I$, we
define the function $f_{\mathbf{y}}: G_I \longrightarrow \{0,1\}$ by
\[f_{\mathbf{y}}(\mathbf{x}) = \begin{cases}
1, & \text{if } \pi_I(\mathbf{x}) = \mathbf{y}, \\
0, & \text{if } \pi_I(\mathbf{x}) \neq \mathbf{y}.
\end{cases}\]
It is straightforward to verify that
\begin{equation*}
|\{\,\mathbf{x}\in A:\ \pi_I(\mathbf{x})=\mathbf{y}\,\}| = \sum_{\mathbf{x} \in A} f_{\mathbf{y}}(\mathbf{x}).
\end{equation*}
By Definition \ref{def:DFT}, the Fourier transform of $f$ is given by

$$\widehat{f}_{\mathbf{y}}(\chi_\mathbf{m'}) = \langle f_{\mathbf{y}}, \chi_{\mathbf{m'}} \rangle = \frac{1}{|G_I|} \sum_{\pi_I(\mathbf{x}) \in G_I} f_{\mathbf{y}}(\pi_I(\mathbf{x})) \overline{\chi_{\mathbf{m'}}(\pi_I(\mathbf{x}))}=\frac{1}{|G_I|}\overline{\chi_{\mathbf{m'}}(\mathbf{y})}.
$$
The Fourier inversion formula then yields
\begin{equation*}
f_{\mathbf{y}}(\mathbf{x}) =\sum_{\mathbf{m}' \in G_I} \widehat{f}_{\mathbf{y}}(\chi_\mathbf{m'}) \chi_{\mathbf{m}'}(\pi_I(\mathbf{x}))= \frac{1}{|G_I|} \sum_{\mathbf{m}' \in G_I} \overline{\chi_{\mathbf{m}'}(\mathbf{y})} \chi_{\mathbf{m}'}(\pi_I(\mathbf{x})).
\end{equation*}
For each $\mathbf{m}' \in G_I$, we define a vector $\mathbf{m} = (m_1, m_2, \ldots, m_k) \in G$ whose components are given by
\[m_j = \begin{cases}
m'_j, & \text{if } j \in I, \\
0, & \text{if } j \notin I,
\end{cases}\]
where $m'_j$ denotes the $j$-th component of $\mathbf{m}'$.
From the relation between $\mathbf{m}'$ and $\mathbf{m}$, it follows that
$$0< |\text{supp}(\mathbf{m})| = |\text{supp}(\mathbf{m}')| \le |I| = t, $$
where $\mathbf{m}'\neq \mathbf{0}$, and for any $\mathbf{x}=(a_1, a_2, \dots, a_k) \in A$, we have
$$\chi_{\mathbf{m}}(\mathbf{x}) = \prod_{j=1}^k \chi_{m_j}(a_j)=\prod_{j\in I} \chi_{m_j'}(a_j)\prod_{j'\notin I} \chi_{0}(a_j')= \chi_{\mathbf{m}'}(\pi_I(\mathbf{x})).$$
Substituting the character expansion of $f_{\mathbf{y}}(\mathbf{x})$ into the frequency counting formula and interchanging the order of finite summations, the row count $|\{\,\mathbf{x}\in A: \pi_I(\mathbf{x})=\mathbf{y}\,\}|$ is given by
\begin{align*}
|\{\,\mathbf{x}\in A:\ \pi_I(\mathbf{x})=\mathbf{y}\,\}|&= \sum_{\mathbf{x} \in A} f_{\mathbf{y}}(\mathbf{x})\\
 &= \sum_{\mathbf{x} \in A} \left[ \frac{1}{|G_I|} \sum_{\mathbf{m}' \in G_I} \overline{\chi_{\mathbf{m}'}(\mathbf{y})} \chi_{\mathbf{m}'}(\pi_I(\mathbf{x})) \right] \\
&= \frac{1}{|G_I|} \sum_{\mathbf{m}' \in G_I} \overline{\chi_{\mathbf{m}'}(\mathbf{y})} \sum_{\mathbf{x} \in A} \chi_{\mathbf{m'}}(\pi_I(\mathbf{x})) \\
&= \frac{1}{|G_I|} \sum_{\mathbf{m}' \in G_I} \overline{\chi_{\mathbf{m}'}(\mathbf{y})} \sum_{\mathbf{x} \in A} \chi_{\mathbf{m}}(\mathbf{x}) \\
&= \frac{1}{|G_I|} \left[ \overline{\chi_{\mathbf{0}}(\mathbf{y})} \sum_{\mathbf{x} \in A} \chi_{\mathbf{0}}(\mathbf{x}) + \sum_{\mathbf{m}' \in G_I \setminus \{\mathbf{0}\}} \overline{\chi_{\mathbf{m}'}(\mathbf{y})} \sum_{\mathbf{x} \in A} \chi_{\mathbf{m}}(\mathbf{x}) \right]\\
&= \frac{1}{|G_I|} \left[ 1 \cdot |A| + 0 \right]\\
&= \frac{|A|}{|G_I|}=\lambda_I.
\end{align*}
Therefore, $A$ is an orthogonal array of strength $t$ over $G$, which completes the proof.
\end{proof}

\begin{rmk}
The character-based criterion established in this paper for the direct product group $G$ is conceptually inspired by Delsarte's foundational work \cite{Delsarte}. Although Theorem 3.30 in \cite{Hedayat1999} stated a characterization of symmetric orthogonal arrays without proof, we provide a necessary and sufficient condition for asymmetric orthogonal arrays via character sums and the Fourier transform on finite groups.
\end{rmk}

Setting \(s_1 = s_2 = \cdots = s_k = s\) in Lemma~\ref{oa} gives the following result.

\begin{cor}[\!\cite{Hedayat1999}]
An array \(A\) is an OA$(|A|,s,k,t)$ over $\mathbb{Z}_s$ if and only if
\[\sum_{\mathbf{x} \in A} e^{2\pi i (\mathbf{u} \cdot \mathbf{x})/s} = 0\]
for every nonzero \(\mathbf{u} \in \mathbb{Z}_s^k\) with \(|\operatorname{supp}(\mathbf{u})| \le t\), where the dot product \(\mathbf{u} \cdot \mathbf{x}\) is taken modulo \(s\).
\end{cor}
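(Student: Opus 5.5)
This corollary is the special case $s_1=s_2=\cdots=s_k=s$ of Lemma~\ref{oa}, so the plan is simply to specialize that lemma and rewrite the character sums in dot-product form. Take $G=\mathbb{Z}_s^k$. By Lemma~\ref{char}, for $\mathbf{u}=(u_1,\dots,u_k)\in\mathbb{Z}_s^k$ the character $\chi_{\mathbf{u}}$ is given by
\[
\chi_{\mathbf{u}}(\mathbf{x})=\prod_{j=1}^k e^{2\pi i u_j x_j/s}=e^{2\pi i (u_1x_1+\cdots+u_kx_k)/s}.
\]

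The only point to check is that reducing the exponent modulo $s$ is harmless. If $u_1x_1+\cdots+u_kx_k = \mathbf{u}\cdot\mathbf{x} + qs$ for some integer $q$, where $\mathbf{u}\cdot\mathbf{x}$ denotes the residue modulo $s$, then $e^{2\pi i qs/s}=1$. Hence
\[
\chi_{\mathbf{u}}(\mathbf{x})=e^{2\pi i(\mathbf{u}\cdot\mathbf{x})/s}
\]
for every $\mathbf{x}\in A$. Summing over the rows of $A$ gives
\[
\sum_{\mathbf{x}\in A}\chi_{\mathbf{u}}(\mathbf{x})=\sum_{\mathbf{x}\in A}e^{2\pi i(\mathbf{u}\cdot\mathbf{x})/s}.
\]

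Next, in the symmetric case an OA$(|A|,s_1s_2\cdots s_k,t)$ over $G$ is by definition exactly an OA$(|A|,s,k,t)$ over $\mathbb{Z}_s$. The index set in Lemma~\ref{oa}, namely nonzero $\mathbf{m}\in G$ with $|\supp(\mathbf{m})|\le t$, is the same as the set of nonzero $\mathbf{u}\in\mathbb{Z}_s^k$ with $|\supp(\mathbf{u})|\le t$. Substituting the identity above into both directions of the equivalence in Lemma~\ref{oa} then yields the corollary.

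There is no real obstacle here; the only care needed is the remark about reducing the exponent modulo $s$ used in the second step.
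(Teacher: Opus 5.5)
Your proposal is correct and matches the paper, which obtains the corollary simply by setting $s_1=s_2=\cdots=s_k=s$ in Lemma~\ref{oa}. Your one addition is the check that $\prod_{j} e^{2\pi i u_jx_j/s}=e^{2\pi i(\mathbf{u}\cdot\mathbf{x})/s}$ even after reducing $\mathbf{u}\cdot\mathbf{x}$ modulo $s$, which is valid and makes explicit a detail the paper leaves implicit.
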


\section{A new proof for Rao's bound of orthogonal arrays}

In this section, we present a novel character-theoretic proof of the Rao bound \cite{Hedayat1999}. Let $\mathbb{C}^N$ denote the $N$-dimensional complex vector space.

\begin{thm}[Rao bound for OAs]\label{Rao bound}
Let \(G=\mathbb{Z}_{s_1}\times\mathbb{Z}_{s_2}\times\cdots\times\mathbb{Z}_{s_k}\), $s_i\in \mathbb{Z}_+$, $1\le i\le k$ and $s_1=\max\{s_i:1\le i\le k\}$. If $A$ is
an OA$(|A|,s_1s_2\cdots s_k,t)$ over $G$, then
\[|A| \ge
\begin{cases}
\displaystyle \sum_{i=0}^{\frac{t}{2}}\sum_{|I|=i}\prod_{j\in I}(s_j-1), & \text{if } $t$ \ \text{is even} , \\[12pt]
\displaystyle \sum_{i=0}^{\lfloor\frac{t}{2}\rfloor}\sum_{|I|=i}\prod_{j\in I}(s_j-1)
\;+\;
(s_1-1)\sum_{\substack{|I|=\lfloor\frac{t}{2}\rfloor \\ 1\not\in I}}
\prod_{j\in I}(s_j-1), & \text{if } $t$  \ \text{is odd},
\end{cases}\]
where $\lfloor \frac{t}{2}\rfloor$ is the greatest integer less than or equal to $\frac{t}{2}$.
\end{thm}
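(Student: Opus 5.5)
The plan is a linear-independence argument in $\mathbb{C}^{|A|}$. Let $N=|A|$ and list the rows of $A$ as $\mathbf{x}_1,\dots,\mathbf{x}_N$. To each character $\chi_{\mathbf{m}}$ we attach the vector
\[
v_{\mathbf{m}}=(\chi_{\mathbf{m}}(\mathbf{x}_1),\dots,\chi_{\mathbf{m}}(\mathbf{x}_N))\in\mathbb{C}^N,
\]
and use the standard Hermitian inner product $(u,w)=\sum_r u_r\overline{w_r}$. The key identity is
\[
(v_{\mathbf{m}},v_{\mathbf{n}})=\sum_{\mathbf{x}\in A}\chi_{\mathbf{m}}(\mathbf{x})\overline{\chi_{\mathbf{n}}(\mathbf{x})}=\sum_{\mathbf{x}\in A}\chi_{\mathbf{m}-\mathbf{n}}(\mathbf{x}),
\]
which follows from $\overline{\chi_{\mathbf{n}}}=\chi_{-\mathbf{n}}$ and $\chi_{\mathbf{m}}\chi_{\mathbf{n}}=\chi_{\mathbf{m}+\mathbf{n}}$. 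By Lemma \ref{oa}, this sum equals $N$ if $\mathbf{m}=\mathbf{n}$ and equals $0$ whenever $\mathbf{m}\neq\mathbf{n}$ and $|\supp(\mathbf{m}-\mathbf{n})|\le t$. So any family $S\subseteq G$ whose pairwise differences all have support of size at most $t$ gives a family of pairwise orthogonal nonzero vectors $\{v_{\mathbf{m}}:\mathbf{m}\in S\}$. These vectors are linearly independent, hence $N\ge|S|$. The proof then reduces to choosing $S$ well.

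\emph{Even case, $t=2u$.} Take $S=\{\mathbf{m}\in G:|\supp(\mathbf{m})|\le u\}$. For $\mathbf{m},\mathbf{n}\in S$ we have $\supp(\mathbf{m}-\mathbf{n})\subseteq\supp(\mathbf{m})\cup\supp(\mathbf{n})$, which has size at most $2u=t$. Counting, the number of $\mathbf{m}$ with $\supp(\mathbf{m})=I$ is $\prod_{j\in I}(s_j-1)$, since each coordinate in $I$ takes one of $s_j-1$ nonzero values. Summing over all $|I|\le u$ gives exactly the first formula.

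\emph{Odd case, $t=2u+1$.} Take $S=S_0\cup S_1$, where $S_0=\{\mathbf{m}:|\supp(\mathbf{m})|\le u\}$ and $S_1=\{\mathbf{m}:|\supp(\mathbf{m})|=u+1,\ 1\in\supp(\mathbf{m})\}$. The support bound on differences must be checked pairwise:
\begin{itemize}
\item For two elements of $S_0$, the union of supports has size at most $2u$.
\item For one element of $S_0$ and one of $S_1$, it has size at most $2u+1$.
\item For two elements of $S_1$, both supports contain index $1$, so the union has size at most $(u+1)+(u+1)-1=2u+1=t$.
\end{itemize}
For the count, an element of $S_1$ is determined by choosing $I'=\supp(\mathbf{m})\setminus\{1\}$ with $|I'|=u$ and $1\notin I'$, then choosing $m_1\neq0$ ($s_1-1$ ways) and the nonzero entries on $I'$. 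This gives $(s_1-1)\sum_{|I'|=u,\,1\notin I'}\prod_{j\in I'}(s_j-1)$, which together with $|S_0|$ is the second formula. The hypothesis $s_1=\max s_i$ is used only to make this the best choice of distinguished coordinate; the argument itself works for any fixed coordinate.

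The main obstacle is the odd case, and specifically the realization that the extra layer of size-$(u+1)$ supports must all share a common coordinate so that their pairwise differences stay within strength $t$. The rest is routine. One should also note that in the difference $\mathbf{m}-\mathbf{n}$, coordinates where $m_j=n_j$ vanish, so its support can only be smaller than the union of supports, which is exactly the direction the argument needs.
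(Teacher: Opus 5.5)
Your proposal is correct and matches the paper's proof essentially step for step. Both restrict characters to the rows of $A$ and use Lemma~\ref{oa} to get pairwise orthogonality whenever $0<|\supp(\mathbf m-\mathbf n)|\le t$; both take $\{\mathbf m:|\supp(\mathbf m)|\le u\}$, adding for odd $t$ the layer of size-$(u+1)$ supports that all contain coordinate $1$ (you are also right that $s_1=\max_i s_i$ only makes coordinate $1$ the best choice, not a requirement of the argument).
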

\begin{proof}
We consider the restrictions of characters $\chi_{\mathbf{m}}$, $\mathbf{m}\in G$, to the OA $A$ as vectors in the complex vector space $\mathbb{C}^{|A|}$. By the character sum property of orthogonal arrays of strength $t$, any two distinct characters $\chi_{\mathbf{m}}, \chi_{\mathbf{n}}$ satisfy $0 < |\operatorname{supp}(\mathbf{m} - \mathbf{n})| \le t$, which guarantees their mutual orthogonality and hence linear independence. Since the number of such linearly independent vectors cannot exceed the dimension $|A|$, the desired bound follows by counting the maximal cardinality of this collection.

Write
\[A = \left(
\begin{array}{*{1}{c}}
\mathbf{x}_1\\
\mathbf{x}_2 \\
\vdots\\
\mathbf{x}_{|A|}
\end{array}
\right), \quad \mathbf{x}_i = (x_{i1}, x_{i2},\dots, x_{ik}) \in G, \quad i=1,2,\dots,|A|.\]
For each $\mathbf{m} = (m_1, m_2, \dots, m_k) \in G$, let
\[\chi_{\mathbf{m}}(A)= \bigl(\chi_{\mathbf{m}}(\mathbf{x}_1), \chi_{\mathbf{m}}(\mathbf{x}_2), \dots, \chi_{\mathbf{m}}(\mathbf{x}_{|A|})\bigr) \in \mathbb{C}^{|A|},\]
where
\[\chi_{\mathbf{m}}(\mathbf{x}_i) = e^{2 \pi i \sum\limits_{j=1}^k m_j x_{ij}/s_j}, \quad i=1,2,\dots,|A|.\]
For any  $\mathbf{m},\mathbf{n} \in G$, the inner product of $\chi_{\mathbf{m}}(A)$ and $\chi_{\mathbf{n}}(A)$ is
\[
\langle \chi_{\mathbf{m}}(A), \chi_{\mathbf{n}}(A) \rangle
= \frac{1}{|A|}\sum_{i=1}^{|A|} \chi_{\mathbf{m}}(\mathbf{x}_i) \, \overline{\chi_{\mathbf{n}}(\mathbf{x}_i)}
= \frac{1}{|A|}\sum_{i=1}^{|A|} \chi_{\mathbf{m}-\mathbf{n}}(\mathbf{x}_i)
= \frac{1}{|A|}\sum_{\mathbf{x} \in A} \chi_{\mathbf{m}-\mathbf{n}}(\mathbf{x}).
\]
Since $A$ is an OA$(|A|,s_1\cdots s_k,t)$ over $G$, we have

\begin{center}
$\sum\limits_{\mathbf{x} \in A} \chi_{\mathbf{g}}(x) = 0$
\end{center}
by Lemma~\ref{oa},
for every $\mathbf{g} \in G$ with $0 < |\operatorname{supp}(\mathbf{g})| \le t$. Consequently, for any two distinct $\mathbf{m},\mathbf{n}\in G$ satisfying $0 < |\operatorname{supp}(\mathbf{m}-\mathbf{n})| \le t$, the restricted vectors $\chi_{\mathbf{m}}(A)$ and $\chi_{\mathbf{n}}(A)$ are mutually orthogonal and hence linearly independent in $\mathbb{C}^{|A|}$.

\medskip

\noindent\textbf{Case 1: $t$ is even.} Let $\mathcal{S} = \{ \chi_{\mathbf{m}} \in \widehat{G} : |\operatorname{supp}(\mathbf{m})| \le \frac{t}{2} \}$. For any distinct characters $\chi_{\mathbf{m}}, \chi_{\mathbf{n}} \in \mathcal{S}$, the triangle inequality yields
\[0 < |\operatorname{supp}(\mathbf{m} - \mathbf{n})| \le |\operatorname{supp}(\mathbf{m})| + |\operatorname{supp}(\mathbf{n})| \le t.\]
It follows that the vectors in $\{\chi_{\mathbf{m}}(A) : \chi_{\mathbf{m}} \in \mathcal{S}\}$ are pairwise orthogonal.

To determine $|\mathcal{S}|$, observe that for any fixed index set $I \subseteq \{1,2,\dots,k\}$ with $|I| = i$, there are precisely $\prod\limits_{j \in I} (s_j - 1)$ characters whose support is exactly $I$. Summing over all index sets of size at most
$\frac{t}{2}$, we obtain
\[|\mathcal{S}| = \sum_{i=0}^{\frac{t}{2}} \sum_{|I|=i} \prod_{j \in I} (s_j - 1).\]
Since these $|\mathcal{S}|$ vectors are linearly independent in $\mathbb{C}^{|A|}$, it follows immediately that $|A| \ge |\mathcal{S}|$.

\noindent\textbf{Case 2: $t$ is odd.} Let $\mathcal{T} = \mathcal{S'} \cup \mathcal{U}$, where
$$\mathcal{S'} = \left\{ \chi_{\mathbf{m}} \in \widehat{G} : |\operatorname{supp}(\mathbf{m})| \le \left\lfloor \frac{t}{2}\right\rfloor \right\}, \
\mathcal{U} = \left\{ \chi_{\mathbf{m}} : |\operatorname{supp}(\mathbf{m})| = \left\lceil \frac{t}{2}\right\rceil \text{ and } 1 \in \operatorname{supp}(\mathbf{m}) \right\}.$$
For any two distinct characters $\chi_{\mathbf{m}}, \chi_{\mathbf{n}} \in \mathcal{T}$, we show that $0 < |\operatorname{supp}(\mathbf{m} - \mathbf{n})| \le t$ via the following three cases:

$(1)$ When $\chi_{\mathbf{m}}, \chi_{\mathbf{n}} \in \mathcal{S'}$, we have $|\operatorname{supp}(\mathbf{m} - \mathbf{n})| \le t$.

$(2)$ When $\chi_{\mathbf{m}} \in \mathcal{U}$ and $\chi_{\mathbf{n}} \in \mathcal{S'}$, the same bound $|\operatorname{supp}(\mathbf{m} - \mathbf{n})| \le t$ holds.

$(3)$ For $\chi_{\mathbf{m}}, \chi_{\mathbf{n}} \in \mathcal{U}$, the first coordinate lies in the support of both indices, so $1 \in \operatorname{supp}(\mathbf{m}) \cap \operatorname{supp}(\mathbf{n})$. Consequently,
$$|\operatorname{supp}(\mathbf{m} - \mathbf{n})| \le |\operatorname{supp}(\mathbf{m})| + |\operatorname{supp}(\mathbf{n})| - 1 =t.$$
From the hypotheses, we obtain $s_1 = \max\{s_i : 1 \le i \le k\}$. Thus, the maximum number of linearly independent restricted characters is
\[|\mathcal{T}| = \sum_{i=0}^{\left\lfloor \frac{t}{2}\right\rfloor} \sum_{|I|=i} \prod_{j \in I} (s_j - 1) + (s_1 - 1)\sum_{\substack{|I|=\left\lfloor \frac{t}{2}\right\rfloor \\ 1 \not\in I}} \prod_{j \in I} (s_j - 1),\]
which immediately implies the desired bound $|A| \ge |\mathcal{T}|$. The proof is completed.
\end{proof}

To illustrate the proof of Theorem \ref{Rao bound}, we present two examples according to
the parity of the strength of orthogonal array.

\begin{exam}
Let
\[
A = \begin{pmatrix}
0 & 0 & 0 \\
0 & 1 & 1 \\
1 & 0 & 1 \\
1 & 1 & 0
\end{pmatrix}.
\]
It is straightforward to see that $A$ is an OA$(4,3,2,2)$ over $\mathbb{Z}_2$. Following \textbf{Case 1} of the proof of Theorem \ref{Rao bound}, we take
\begin{center}
$\mathcal{S} = \{ \chi_{\mathbf{m}} \in \widehat{\mathbb{Z}_2^3} : |\operatorname{supp}(\mathbf{m})| \le 1 \}=\{ \chi_{(0,0,0)}, \chi_{(1,0,0)}, \chi_{(0,1,0)}, \chi_{(0,0,1)} \}.$
\end{center}
For each $\chi_\mathbf{m} \in \mathcal{S}$, consider the vector $\big( \chi_{\mathbf{m}}(\mathbf{x}) \big)_{\mathbf{x} \in A}$, where $\chi_{\mathbf{m}}(\mathbf{x}) = (-1)^{\mathbf{m} \cdot \mathbf{x}}$. These four vectors are the row vectors, as shown below.
\begin{center}
\begin{tabular}{c|cccc}
\hline
\diagbox[width=3em,height=2em]{\(\mathbf{m}\)}{\(\mathbf{x}\)} & $(0,0,0)$ & $(0,1,1)$ & $(1,0,1)$ & $(1,1,0)$ \\
\hline
$(0,0,0)$ & $1$ & $1$ & $1$ & $1$ \\
$(1,0,0)$ & $1$ & $1$ & $-1$ & $-1$ \\
$(0,1,0)$ & $1$ & $-1$ & $1$ & $-1$ \\
$(0,0,1)$ & $1$ & $-1$ & $-1$ & $1$ \\
\hline
\end{tabular}
\end{center}
For any two distinct $\chi_{\mathbf{m}}, \chi_{\mathbf{n}} \in \mathcal{S}$, the vectors
$\chi_{\mathbf{m}}(A)$ and $\chi_{\mathbf{n}}(A)$ are mutually orthogonal.
Consequently, the four vectors are linearly independent in $\mathbb{C}^4$, thus establishing the lower bound
\[|A| \ge |\mathcal{S}| = \sum_{i=0}^{1} \binom{3}{i}(2-1)^i = \binom{3}{0} + \binom{3}{1} = 1 + 3 = 4.\]
\end{exam}

\begin{exam}
Let
\[A = \left(
\begin{array}{*{16}{c}}
0 & 0 & 0 & 0 & 1 & 1 & 1 & 1 & 2 & 2 & 2 & 2 & 3 & 3 & 3 & 3 \\
0 & 0 & 1 & 1 & 0 & 0 & 1 & 1 & 0 & 0 & 1 & 1 & 0 & 0 & 1 & 1 \\
0 & 1 & 0 & 1 & 0 & 1 & 0 & 1 & 0 & 1 & 0 & 1 & 0 & 1 & 0 & 1 \\
0 & 1 & 1 & 0 & 0 & 1 & 1 & 0 & 1 & 0 & 0 & 1 & 1 & 0 & 0 & 1
\end{array}
\right).\]
It is easy to verify that $A^T$ is an OA$(16, 4^1 2^3, 3)$ over $G = \mathbb{Z}_4 \times \mathbb{Z}_2^3$, where $A^T$ is the transpose of $A$. Following \textbf{Case 2} of the proof of Theorem \ref{Rao bound}, let $\mathcal{T} = \mathcal{S'} \cup \mathcal{U}$, where
$$\begin{aligned}
\mathcal{S'} &= \{ \chi_{\mathbf{m}}\in \widehat{G} : |\operatorname{supp}(\mathbf{m})| \le 1 \} \\ &= \{ \chi_{(0,0,0,0)},\chi_{(1,0,0,0)}, \chi_{(2,0,0,0)}, \chi_{(3,0,0,0)},\chi_{(0,1,0,0)}. \chi_{(0,0,1,0)}, \chi_{(0,0,0,1)} \}
\end{aligned}$$
and
$$\begin{aligned}
\mathcal{U}& = \{ \chi_{\mathbf{m}}\in \widehat{\mathbb{Z}_4 \times \mathbb{Z}_2^3} : |\operatorname{supp}(\mathbf{m})| = 2,\ 1 \in \operatorname{supp}(\mathbf{m}) \}\\
&=\{ \chi_{(1,1,0,0)},\chi_{(1,0,1,0)}, \chi_{(1,0,0,1)},\chi_{(2,1,0,0)},\chi_{(2,0,1,0)}, \chi_{(2,0,0,1)}, \chi_{(3,1,0,0)},\chi_{(3,0,1,0)}, \chi_{(3,0,0,1)} \}.
\end{aligned}$$
For any two distinct $\chi_{\mathbf{m}}, \chi_{\mathbf{n}} \in \mathcal{T}$, we have the vectors in $\{\chi_{\mathbf{m}}(A^T) : \chi_{\mathbf{m}} \in \mathcal{T}\}$ are pairwise orthogonal. Consequently,
$$|A^T| \ge|\mathcal{T}| =|\mathcal{S'}|+ |\mathcal{U}|= (1 + (4-1) + 3(2-1))+((4-1) \times 3(2-1))=7 + 9 = 16.$$
\end{exam}

\section{Improved Bounds for nested orthogonal arrays }

In this section, we improve the lower bounds for asymmetric nested orthogonal arrays by exploiting group characters and the properties of Gram matrices.

\begin{defi}
Suppose $A$ is an OA$(|A|,s_1s_2\cdots s_k,t)$ over $G = \mathbb{Z}_{s_1} \times \mathbb{Z}_{s_2} \times\cdots \times \mathbb{Z}_{s_k}$ that contains a subarray $B$ such that $B$ is  an OA$(|B|,r_1r_2\cdots r_k,t)$ over $H = \mathbb{Z}_{r_1} \times \mathbb{Z}_{r_2} \times\cdots \times \mathbb{Z}_{r_k}$,
where the elements of $B$ satisfy $set(B) \subseteq H$, and $r_i \le s_i$ for all $1 \le i \le k$, with strict inequality $r_i < s_i$ for at least one $i$.
Then \( A \) is called an \emph{asymmetric nested orthogonal array} and is denoted by NOA\(((|A|, |B|), k, (s_1s_2 \cdots s_k, r_1 r_2\cdots r_k), t) \). In the special case where
\( s_1 =s_2 = \cdots = s_k = s \) and \( r_1 = r_2 =\cdots = r_k = r \), this reduces to a \emph{symmetric nested orthogonal array}, denoted by NOA\(((|A|, |B|), k, (s, r), t) \).
\end{defi}

For a matrix $A\in \mathbb{C}^{m\times n}$, we define its \emph{Gram matrix} as $AA^H$,  where $A^H$ is the conjugate transpose of $A$. The Hermitian norm on $\mathbb{C}^n$ is denoted by $\|\cdot\|$ and given by $\|\mathbf{u}\| = \sqrt{ \mathbf{u}^H\mathbf{u}}$ for $\mathbf{u} \in \mathbb{C}^n$. We now present some results concerning the eigenvalues of the Gram matrix.

\begin{lem}[Rayleigh quotient\cite{Serre2010}]\label{lem:rayleigh}
Let $M$ be an $m \times m$ Gram matrix with maximum eigenvalue $\lambda_{max}(M)$. Then
\[\lambda_{max}(M) = \max\left\{\frac{\mathbf{v}^H M \mathbf{v}}{\|\mathbf{v}\|^2}:\|\mathbf{v}\|\neq0, \mathbf{v} \in \mathbb{C}^m\right\}= \max\left\{\frac{\mathbf{v}^H M \mathbf{v}}{\mathbf{v}^H\mathbf{v}}:\|\mathbf{v}\|\neq0, \mathbf{v} \in \mathbb{C}^m\right\}.\]
\end{lem}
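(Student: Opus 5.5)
The statement is the Rayleigh quotient characterization: for a Gram matrix $M$, $\lambda_{max}(M)$ equals the maximum of $\mathbf{v}^H M\mathbf{v}/\mathbf{v}^H\mathbf{v}$ over nonzero $\mathbf{v}\in\mathbb{C}^m$. The plan is to use the spectral theorem for Hermitian matrices.

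First I would observe that $M=AA^H$ is Hermitian, since $(AA^H)^H=AA^H$. It is also positive semidefinite, because
\[
\mathbf{v}^H M\mathbf{v}=\|A^H\mathbf{v}\|^2\ge 0 .
\]
By the spectral theorem there is a unitary matrix $U$ with $M=U\Lambda U^H$. Here $\Lambda=\mathrm{diag}(\lambda_1,\dots,\lambda_m)$ has real entries, ordered so that $\lambda_1=\lambda_{max}(M)\ge\lambda_2\ge\cdots\ge\lambda_m$. In particular the quotient is real-valued, so taking its maximum makes sense.

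Next, for any nonzero $\mathbf{v}$, set $\mathbf{w}=U^H\mathbf{v}$. Then $\mathbf{w}\neq\mathbf{0}$ and $\mathbf{w}^H\mathbf{w}=\mathbf{v}^H\mathbf{v}$ because $U$ is unitary. This gives
\[
\frac{\mathbf{v}^H M\mathbf{v}}{\mathbf{v}^H\mathbf{v}}=\frac{\sum_{j=1}^m\lambda_j|w_j|^2}{\sum_{j=1}^m|w_j|^2}\le\lambda_1 .
\]
The right-hand side is a convex combination of the eigenvalues with weights $|w_j|^2/\|\mathbf{w}\|^2$, so it is bounded by the largest one.

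For the reverse inequality I take $\mathbf{v}$ to be the first column of $U$, a unit eigenvector for $\lambda_1$. For this choice the quotient equals $\lambda_1$, so the supremum is attained and is a genuine maximum. The two displayed forms in the statement coincide because $\|\mathbf{v}\|^2=\mathbf{v}^H\mathbf{v}$ by the definition of the Hermitian norm.

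No step is a real obstacle. The only point needing care is that the spectral theorem applies, which rests on the Hermitian property of the Gram matrix noted in the first step.
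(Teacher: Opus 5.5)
Your proof is correct. You diagonalize the Hermitian matrix $M=AA^H$ with a unitary $U$, bound the quotient by writing it as a convex combination of the eigenvalues, and show the bound is attained at a unit eigenvector for $\lambda_{max}(M)$; this is the standard spectral-theorem argument. The paper gives no proof of this lemma and only cites Serre's textbook, where this argument is the usual one (and, as your write-up implicitly shows, only the Hermitian property is needed, not positive semidefiniteness).
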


The following result holds from a straightforward linear algebraic argument, we omit the proof for simplicity.

\begin{lem}\label{same nonzero eigenvalues}
For $A \in \mathbb{C}^{m \times n}$, the matrices $AA^H$ and $A^HA$ have the same nonzero eigenvalues.
\end{lem}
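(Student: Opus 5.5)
The claim is that for any $A\in\mathbb{C}^{m\times n}$, the matrices $AA^H$ ($m\times m$) and $A^HA$ ($n\times n$) have the same nonzero eigenvalues, and I would also track multiplicities. The plan is to transport eigenvectors back and forth by multiplication with $A$ and $A^H$.

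First, I would treat a single eigenvalue. Let $\lambda\neq 0$ be an eigenvalue of $AA^H$ with eigenvector $\mathbf{v}\neq\mathbf{0}$, so $AA^H\mathbf{v}=\lambda\mathbf{v}$, and put $\mathbf{w}=A^H\mathbf{v}\in\mathbb{C}^n$.
\begin{itemize}
\item The vector $\mathbf{w}$ is nonzero: if $\mathbf{w}=\mathbf{0}$, then $\lambda\mathbf{v}=A\mathbf{w}=\mathbf{0}$, which contradicts $\lambda\neq0$ and $\mathbf{v}\neq\mathbf{0}$.
\item It is an eigenvector of $A^HA$ for $\lambda$, since $A^HA\mathbf{w}=A^H(AA^H\mathbf{v})=\lambda A^H\mathbf{v}=\lambda\mathbf{w}$.
\end{itemize}
Applying the same argument with $A^H$ in place of $A$ shows that every nonzero eigenvalue of $A^HA$ is an eigenvalue of $AA^H$.

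Next, I would show the multiplicities agree. Both matrices are Hermitian positive semidefinite, hence diagonalizable, so algebraic and geometric multiplicities coincide and it suffices to compare eigenspace dimensions. For fixed $\lambda\neq0$, the map $\mathbf{v}\mapsto A^H\mathbf{v}$ sends the $\lambda$-eigenspace of $AA^H$ into the $\lambda$-eigenspace of $A^HA$. It is injective because $A(A^H\mathbf{v})=\lambda\mathbf{v}$, so $A^H\mathbf{v}=\mathbf{0}$ forces $\mathbf{v}=\mathbf{0}$. The map $\mathbf{w}\mapsto A\mathbf{w}$ is injective in the reverse direction by the same reasoning, so the two eigenspaces have equal dimension.

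As an alternative that avoids the diagonalizability step, one can cite the Sylvester determinant identity $\lambda^m\det(\lambda I_n-A^HA)=\lambda^n\det(\lambda I_m-AA^H)$. This shows the two characteristic polynomials differ only by a power of $\lambda$, and therefore share their nonzero roots with multiplicities.

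The only point needing care is nonvanishing: one must check that $\mathbf{w}=A^H\mathbf{v}\neq\mathbf{0}$, and that the transported eigenvectors stay independent. Both facts come from $\lambda\neq0$, as used above.
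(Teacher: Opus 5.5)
Your proof is correct and complete. The paper does not prove this lemma; it calls it a straightforward linear-algebra fact and omits the argument, so there is no competing route to compare with. Your eigenvector-transport argument ($\mathbf{v}\mapsto A^H\mathbf{v}$ and $\mathbf{w}\mapsto A\mathbf{w}$, with $\lambda\neq0$ guaranteeing nonvanishing and injectivity) is the standard proof. The Sylvester determinant identity $\lambda^m\det(\lambda I_n-A^HA)=\lambda^n\det(\lambda I_m-AA^H)$ is a valid alternative that gives algebraic multiplicities without invoking diagonalizability.

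It is worth noticing how little of this the paper actually uses. The lemma appears only in part (1) of Theorem~\ref{lem:eigenvalues}, where $\Psi_A^H\Psi_A=|A|\,I_{|\mathcal{F}|}$. That step needs only your first bullet: every nonzero eigenvalue of $\Psi_A\Psi_A^H$ is an eigenvalue of $\Psi_A^H\Psi_A$, hence equals $|A|$. That gives the bound $\lambda_{\max}(\Psi_A\Psi_A^H)\le|A|$ used later. Your multiplicity comparison is correct but goes beyond what is required. In that specific setting the fact is even more direct: $(\Psi_A\Psi_A^H)^2=\Psi_A(\Psi_A^H\Psi_A)\Psi_A^H=|A|\,\Psi_A\Psi_A^H$, so every eigenvalue $\mu$ satisfies $\mu^2=|A|\mu$.
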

%

\begin{lem}[Parseval's Identity \cite{Terras1999}]\label{lem:parseval}
Let \( G = \mathbb{Z}_{s_1} \times \mathbb{Z}_{s_2} \times\cdots \times \mathbb{Z}_{s_k} \) be a finite abelian group, and let \( f : G \to \mathbb{C} \) be a complex-valued function. Then
\[\sum_{\chi_\mathbf{m} \in \widehat{G}} |\widehat{f}(\chi_\mathbf{m})|^2 = \frac{1}{|G|} \sum_{\mathbf{x} \in G} |f(\mathbf{x})|^2.\]
\end{lem}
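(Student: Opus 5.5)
The statement is Parseval's Identity (Lemma~\ref{lem:parseval}). I would prove it by direct expansion, using the definition of the Fourier transform (Definition~\ref{def:DFT}) and the orthogonality relations of Lemma~\ref{OR}.

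First, I write
\[
|\widehat{f}(\chi_\mathbf{m})|^2=\widehat{f}(\chi_\mathbf{m})\overline{\widehat{f}(\chi_\mathbf{m})}.
\]
Substituting the definition gives
\[
\frac{1}{|G|^2}\sum_{\mathbf{x}\in G}\sum_{\mathbf{y}\in G} f(\mathbf{x})\overline{f(\mathbf{y})}\,\overline{\chi_\mathbf{m}(\mathbf{x})}\chi_\mathbf{m}(\mathbf{y}).
\]
Using $\overline{\chi_\mathbf{m}(\mathbf{x})}\chi_\mathbf{m}(\mathbf{y})=\chi_\mathbf{m}(\mathbf{y}-\mathbf{x})$, I then sum over all $\chi_\mathbf{m}\in\widehat{G}$. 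By Lemma~\ref{char2}, this is the same as summing over $\mathbf{m}\in G$.

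All sums are finite, so I can interchange them. This isolates the inner sum $\sum_{\mathbf{m}\in G}\chi_\mathbf{m}(\mathbf{y}-\mathbf{x})$. The one point needing care is that Lemma~\ref{OR} sums a fixed character over group elements, while here a fixed element is summed over characters. I handle this with the symmetry $\chi_\mathbf{m}(\mathbf{z})=\chi_\mathbf{z}(\mathbf{m})$, which is evident from the explicit formula in Lemma~\ref{char}. With it,
\[
\sum_{\mathbf{m}\in G}\chi_\mathbf{m}(\mathbf{z})=\sum_{\mathbf{m}\in G}\chi_\mathbf{z}(\mathbf{m}).
\]
By Lemma~\ref{OR}, this equals $|G|$ if $\mathbf{z}=\mathbf{0}$ and $0$ otherwise.

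Only the diagonal terms $\mathbf{x}=\mathbf{y}$ survive, so
\[
\sum_{\chi_\mathbf{m}\in\widehat{G}}|\widehat{f}(\chi_\mathbf{m})|^2=\frac{1}{|G|^2}\cdot|G|\sum_{\mathbf{x}\in G}|f(\mathbf{x})|^2=\frac{1}{|G|}\sum_{\mathbf{x}\in G}|f(\mathbf{x})|^2,
\]
which is the claim.

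Alternatively, I could start from the Fourier inversion formula $f=\sum_\mathbf{m}\widehat{f}(\chi_\mathbf{m})\chi_\mathbf{m}$ and compute $\langle f,f\rangle$. The orthonormality of characters in Lemma~\ref{OR} gives $\langle f,f\rangle=\sum_\mathbf{m}|\widehat{f}(\chi_\mathbf{m})|^2$. The left side equals $\frac{1}{|G|}\sum_{\mathbf{x}}|f(\mathbf{x})|^2$ by the definition of the inner product, so this route also gives the result. It is the shorter route, and I would present it, keeping the first computation as the underlying justification.
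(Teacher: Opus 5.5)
Your argument is correct. The paper gives no proof of this lemma. It is quoted from Terras's book and used only as a black box in the proof of Lemma~\ref{lem:combined_univariate}, so there is no internal argument to compare with. Your first computation gives a proof that is self-contained relative to Lemmas~\ref{char}, \ref{char2} and~\ref{OR}.

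One step should be stated more carefully than ``by Lemma~\ref{char2}''. An abstract isomorphism $G\cong\widehat{G}$ does not by itself say that the characters $\chi_{\mathbf m}$, $\mathbf m\in G$, are pairwise distinct. What you need is that $\mathbf m\mapsto\chi_{\mathbf m}$ is injective. This follows by evaluating at the unit vectors $\mathbf e_j$, where $\chi_{\mathbf m}(\mathbf e_j)=e^{2\pi i m_j/s_j}$. Together with $|\widehat G|=|G|$, this gives the bijection that justifies re-indexing the sum over $\widehat G$ as a sum over $\mathbf m\in G$. The same observation turns ``$\chi_{\mathbf z}$ is trivial'' in Lemma~\ref{OR} into ``$\mathbf z=\mathbf 0$''. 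In your second route, it is what gives $\langle\chi_{\mathbf m},\chi_{\mathbf n}\rangle=0$ for $\mathbf m\neq\mathbf n$.

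Of your two routes, the first derives the dual orthogonality relation from Lemma~\ref{OR}: the sum of $\chi_{\mathbf m}(\mathbf z)$ over $\mathbf m\in G$ is $|G|$ if $\mathbf z=\mathbf 0$ and $0$ otherwise. It gets this via the symmetry $\chi_{\mathbf m}(\mathbf z)=\chi_{\mathbf z}(\mathbf m)$, which uses the explicit coordinates of $G$ and is perfectly adequate here. The second route is shorter, but it rests on the Fourier inversion formula. The paper asserts that formula in Definition~\ref{def:DFT} without proof, and proving it needs either the same dual orthogonality or a dimension count ($|G|$ orthonormal characters form a basis of the $|G|$-dimensional space of functions on $G$). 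Presenting the short route is legitimate within the paper's framework. Still, it is the first computation that actually rests only on the lemmas the paper states.
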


\begin{lem}\label{lem:combined_univariate}
Let $s, r \in \mathbb{Z}_+$ with $1 < r \le s$, and $\omega_{s} = e^{2\pi i / s}$.
Then
\[\sum_{a=1}^{s-1} \left| \sum_{x=0}^{r-1} \omega_{s}^{a x} \right|^2 = r(s - r).\]
\end{lem}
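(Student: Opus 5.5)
The plan is to obtain the identity from Parseval's identity (Lemma \ref{lem:parseval}) on the cyclic group $\mathbb{Z}_s$, applied to the indicator function of the subset $\{0,1,\dots,r-1\}$. Define $f:\mathbb{Z}_s\to\{0,1\}$ by $f(x)=1$ if $0\le x\le r-1$ and $f(x)=0$ otherwise; this makes sense since $r\le s$. The characters of $\mathbb{Z}_s$ are $\chi_a(x)=\omega_s^{ax}$ for $a\in\mathbb{Z}_s$ (Lemma \ref{char} with $k=1$). By Definition \ref{def:DFT},
\[
\widehat{f}(\chi_a)=\frac{1}{s}\sum_{x=0}^{r-1}\overline{\omega_s^{ax}} ,
\qquad\text{so}\qquad
|\widehat{f}(\chi_a)|^2=\frac{1}{s^2}\Bigl|\sum_{x=0}^{r-1}\omega_s^{ax}\Bigr|^2 ,
\]
because the modulus is unchanged by complex conjugation.

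Next, apply Lemma \ref{lem:parseval}. The right-hand side is $\frac{1}{s}\sum_{x\in\mathbb{Z}_s}|f(x)|^2=\frac{r}{s}$. Multiplying both sides by $s^2$ gives
\[
\sum_{a=0}^{s-1}\Bigl|\sum_{x=0}^{r-1}\omega_s^{ax}\Bigr|^2 = rs .
\]

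Finally, separate the term $a=0$. There the inner sum equals $r$, so this term contributes $r^2$. Subtracting it leaves
\[
\sum_{a=1}^{s-1}\Bigl|\sum_{x=0}^{r-1}\omega_s^{ax}\Bigr|^2=rs-r^2=r(s-r),
\]
which is the claim.

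I expect no real obstacle. The only points needing care are the normalisations. The paper's inner product carries the factor $\frac{1}{|G|}$, so the Fourier coefficients carry $\frac1s$, and the factor $s^2$ has to be tracked exactly. The identity also holds trivially when $r=s$: both sides vanish, since every nontrivial character sums to $0$ over $\mathbb{Z}_s$ by Lemma \ref{OR}.

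If a direct check is wanted instead, expand $|\cdot|^2$ as a double sum $\sum_{x,y=0}^{r-1}\omega_s^{a(x-y)}$ and interchange summations. This leaves the sum $\sum_{a=0}^{s-1}\omega_s^{a(x-y)}$, which equals $s$ when $x=y$ and $0$ otherwise, because $|x-y|<r\le s$. That again gives $rs$ for the full sum, hence $r(s-r)$ after removing $a=0$.
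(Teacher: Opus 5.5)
Your proposal is correct and follows essentially the same route as the paper: you take the indicator function of $\{0,1,\dots,r-1\}$ on $\mathbb{Z}_s$, apply Parseval's identity (Lemma~\ref{lem:parseval}) to get the full sum $rs$ (after scaling by $s^2$), and subtract the $a=0$ term $r^2$. Your direct double-sum check is also valid, and it is an elementary alternative that avoids Parseval altogether.
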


\begin{proof}
Define the function $f: \mathbb{Z}_{s} \longrightarrow \{0,1\}$ by
\[f(x) =
\begin{cases}
1, & \text{if }  x \in \{0, 1, \dots, r-1\}, \\
0, & \text{otherwise}.
\end{cases}\]
It then follows that
\[\sum_{x \in \mathbb{Z}_{s}} |f(x)|^2 = \sum_{x=0}^{r-1} 1^2 + \sum_{x=r}^{s-1} 0^2 = \sum_{x=0}^{r-1} 1 = r. \]
By Definition \ref{def:DFT}, for every $a \in \mathbb{Z}_{s}$, the Fourier transform $\widehat{f}(\chi_{a})$ of $f$ associated with the character $\chi_{a}(x) = \omega_{s}^{a x}$ is given by
\[\widehat{f}(\chi_{a}) = \langle f, \chi_{a} \rangle = \frac{1}{s} \sum_{x \in \mathbb{Z}_{s}} f(x) \overline{\chi_{a}(x)} = \frac{1}{s} \sum_{x=0}^{r-1} \omega_{s}^{-a x}. \]
In particular, if $a=0$, $\widehat{f}(\chi_{0}) =\frac{r}{s}$.
This explicitly yields
\[\sum_{x=0}^{r-1} \omega_{s}^{-a x} = s \widehat{f}(\chi_{a}).\]
Then we obtain
\begin{center}
$\begin{aligned}
\sum_{a=1}^{s-1} \left| \sum_{x=0}^{r-1} \omega_{s}^{a x} \right|^2
&= \sum_{a=0}^{s-1} \left| \sum_{x=0}^{r-1} \omega_{s}^{a x} \right|^2- \left| \sum_{x=0}^{r-1} \omega_{s}^{0} \right|^2\\
&= \sum_{a=0}^{s-1} \left| \sum_{x=0}^{r-1} \omega_{s}^{-a x} \right|^2- \left| \sum_{x=0}^{r-1} \omega_{s}^{0} \right|^2\\
&=\sum_{a=0}^{s-1} |s \widehat{f}(\chi_{a})|^2-|s \widehat{f}(\chi_0)|^2\\
&=s^2\left( \sum_{a=0}^{s-1} |\widehat{f}(\chi_{a})|^2 - |\widehat{f}(\chi_0)|^2\right).
\end{aligned}$
\end{center}
By Lemma~\ref{lem:parseval}, which asserts the identity
$\sum\limits_{a \in \mathbb{Z}_{s}} |\widehat{f}(\chi_{a})|^2 = \frac{1}{s} \sum\limits_{x \in \mathbb{Z}_{s}} |f(x)|^2$, thus

\begin{center}
$\begin{aligned}
s^2\left( \sum_{a=0}^{s-1} |\widehat{f}(\chi_{a})|^2 - |\widehat{f}(\chi_0)|^2\right)
&=s^2\left( \frac{1}{s} \sum_{x \in \mathbb{Z}_{s}} |f(x)|^2 - \left|\frac{r}{s}\right|^2\right)\\
&= s^2 \cdot\left(\frac{1}{s}\cdot r-\frac{r^2}{s^2}\right)\\
&= r(s - r).
\end{aligned}$
\end{center}
It follows that $\sum\limits_{a=1}^{s-1} \left| \sum\limits_{x=0}^{r-1} \omega_{s}^{a x} \right|^2 = r(s - r).$ This completes the proof.
\end{proof}

Let
$$\mathcal{F}=
\begin{cases}
\mathcal{S}, & \text{if } t=2u, \\
\mathcal{T}, & \text{if } t=2u+1,
\end{cases}$$
where $\mathcal{S}$ and $\mathcal{T}$ are the set of characters defined in Theorem~\ref{Rao bound}. For any orthogonal array, $C$, define the $|C| \times |\mathcal{F}|$ matrix
\[\Psi_C = \bigl( \chi_{\mathbf{a}}(\mathbf{x}) \bigr)_{\mathbf{x} \in C, \chi_\mathbf{a} \in \mathcal{F}}.\]

\begin{thm}\label{lem:eigenvalues}
Let $A$ be an $OA(|A|, s_1s_2 \cdots s_k, t)$ over $G$ and $B$ an OA$(|B|, r_1r_2 \cdots r_k, t)$ over $H$, such that $A$ contains $B$. Let $\Psi_A\Psi_A^H$ denote the Gram matrix of $\Psi_A$. Then the following properties hold.

$(1)$ All nonzero eigenvalues of $\Psi_A\Psi_A^H$ are equal to $|A|$, i.e., the size of $A$.

$(2)$ Let $\frac{s_1}{r_1}=\max\{\frac{s_i}{r_i}:1\le i\le k\}$. Then
\small{ $$|A| \ge \begin{cases}
\displaystyle |B| \sum_{i=0}^{t/2} \sum_{\substack{I \subseteq \{1,\dots,k\}\\ |I|=i}} \prod_{j\in I} \left( \frac{s_j}{r_j} - 1 \right), & t \text{ even},\\[2.5em]
\displaystyle |B| \left[ \sum_{i=0}^{\lfloor t/2 \rfloor} \sum_{\substack{I \subseteq \{1,\dots,k\}\\ |I|=i}} \prod_{j\in I} \left( \frac{s_j}{r_j} - 1 \right) + \left( \frac{s_1}{r_1} - 1 \right) \sum_{\substack{I \subseteq \{2,\dots,k\}\\ |I|=\lfloor t/2 \rfloor}} \prod_{j\in I} \left( \frac{s_j}{r_j} - 1 \right) \right], & t \text{ odd}.
\end{cases}$$}
\end{thm}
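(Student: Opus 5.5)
The plan is to view $\Psi_A$ as a matrix whose columns are the restricted characters $\chi_{\mathbf a}(A)$, $\chi_{\mathbf a}\in\mathcal F$. Part (1) then follows from the orthogonality established in the proof of Theorem~\ref{Rao bound}. Part (2) follows by evaluating the Rayleigh quotient of $\Psi_A\Psi_A^H$ at the indicator vector of the subarray $B$.

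\textbf{Part (1).} The $(\mathbf a,\mathbf b)$ entry of $\Psi_A^H\Psi_A$ is $\sum_{\mathbf x\in A}\overline{\chi_{\mathbf a}(\mathbf x)}\chi_{\mathbf b}(\mathbf x)=\sum_{\mathbf x\in A}\chi_{\mathbf b-\mathbf a}(\mathbf x)$. For distinct $\chi_{\mathbf a},\chi_{\mathbf b}\in\mathcal F$, Cases 1 and 2 of the proof of Theorem~\ref{Rao bound} give $0<|\supp(\mathbf b-\mathbf a)|\le t$, so by Lemma~\ref{oa} this entry vanishes. Each diagonal entry equals $|A|$. Hence $\Psi_A^H\Psi_A=|A|\,I_{|\mathcal F|}$, and Lemma~\ref{same nonzero eigenvalues} shows that every nonzero eigenvalue of $\Psi_A\Psi_A^H$ equals $|A|$. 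In particular $\lambda_{\max}(\Psi_A\Psi_A^H)=|A|$.

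\textbf{Part (2).} Let $\mathbf v\in\mathbb C^{|A|}$ be the $0/1$ indicator of the rows of $A$ forming $B$, so that $\|\mathbf v\|^2=|B|$. Lemma~\ref{lem:rayleigh} gives
\[
\sum_{\chi_{\mathbf a}\in\mathcal F}\Bigl|\sum_{\mathbf x\in B}\chi_{\mathbf a}(\mathbf x)\Bigr|^2=\mathbf v^H\Psi_A\Psi_A^H\mathbf v\le |A|\,|B| .
\]
It therefore suffices to show that the left-hand side equals $|B|^2$ times the bracketed quantity in the statement.

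Fix $\chi_{\mathbf a}\in\mathcal F$ with $\supp(\mathbf a)=I$, so $|I|\le t$. Because $B$ is an OA of strength $t$ over $H$, the projection $\pi_I(B)$ is uniform on $H_I=\prod_{j\in I}\{0,\dots,r_j-1\}$, with each element occurring $|B|/\prod_{j\in I}r_j$ times. Grouping the rows of $B$ by their projection, as in the proof of Lemma~\ref{oa}, yields
\[
\sum_{\mathbf x\in B}\chi_{\mathbf a}(\mathbf x)=\frac{|B|}{\prod_{j\in I}r_j}\prod_{j\in I}\sum_{y=0}^{r_j-1}\omega_{s_j}^{a_jy}.
\]
Here the characters are those of $G$, with moduli $s_j$, evaluated only at points of $H_I$. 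Summing $|\cdot|^2$ over all $\mathbf a$ with support exactly $I$ factorizes coordinatewise. Lemma~\ref{lem:combined_univariate} then gives
\[
\frac{|B|^2}{\prod_{j\in I}r_j^2}\prod_{j\in I}r_j(s_j-r_j)=|B|^2\prod_{j\in I}\Bigl(\frac{s_j}{r_j}-1\Bigr).
\]
If some $r_j=1$, the identity $\sum_{a=1}^{s-1}|1|^2=s-1=r(s-r)$ holds directly.

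Summing over the supports occurring in $\mathcal F$ gives the result. For $t$ even these are all $I$ with $|I|\le t/2$. For $t$ odd they are all $I$ with $|I|\le\lfloor t/2\rfloor$, together with all $I=\{1\}\cup J$ where $J\subseteq\{2,\dots,k\}$ and $|J|=\lfloor t/2\rfloor$; the latter contribute $(\frac{s_1}{r_1}-1)\prod_{j\in J}(\frac{s_j}{r_j}-1)$. This reproduces exactly $|B|^2$ times the bracket. Dividing by $|B|$ completes the argument.

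\textbf{Main obstacle.} The step needing most care is the factorization of the character sum over $B$. It mixes the moduli $s_j$ of $G$ with the uniformity of $B$ over $H$. It works only because the orthogonality of $B$ concerns its entries as integers in $\{0,\dots,r_j-1\}\subseteq\mathbb Z_{s_j}$, and $|I|\le t$ holds for every character in $\mathcal F$.

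The hypothesis that $s_1/r_1$ is maximal is not used for validity. It only selects the best choice of distinguished coordinate, exactly as $s_1$ did in Theorem~\ref{Rao bound}. Note, however, that $\mathcal F$ is built with coordinate $1$ distinguished, so the proof requires that the same coordinate maximizes both $s_i$ and $s_i/r_i$, or else that $\mathcal U$ is redefined using the coordinate maximizing $s_i/r_i$. The orthogonality argument does not depend on which coordinate is distinguished, so this redefinition is harmless.
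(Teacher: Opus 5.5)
Your proof is correct and follows the paper's argument: part (1) via $\Psi_A^H\Psi_A=|A|\,I_{|\mathcal F|}$, and part (2) via a Rayleigh-quotient bound restricted to the rows of $B$, then the uniform-projection factorization of the character sum and Lemma~\ref{lem:combined_univariate}. The only differences are cosmetic. You evaluate the quotient of $\Psi_A\Psi_A^H$ directly at the indicator vector of $B$, whereas the paper first bounds $\lambda_{\max}(\Psi_B\Psi_B^H)\le |A|$ using the principal-submatrix argument and then tests with $\mathbf{1}_{|B|}$, which is the same computation. Your side remarks on the case $r_j=1$ and on which coordinate is distinguished in $\mathcal F$ are accurate clarifications of points the paper leaves implicit.
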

\begin{proof}
$(1)$ By assumption, $\Psi_A = \bigl( \chi_{\mathbf{a}}(\mathbf{x}) \bigr)_{\mathbf{x} \in A, \ \chi_{\mathbf{a}} \in \mathcal{F}}$ is an $|A| \times |\mathcal{F}|$ matrix, where
\[
\mathcal{F} =
\begin{cases}
\{\chi_{\mathbf{a}} : |\operatorname{supp}(\mathbf{a})| \le t/2\}, & t \text{ even},\\[0.5em]
\{\chi_{\mathbf{a}} : |\operatorname{supp}(\mathbf{a})| \le \lfloor t/2 \rfloor\} \cup \{\chi_{\mathbf{a}} : |\operatorname{supp}(\mathbf{a})| = \lceil t/2 \rceil \text{ and } 1 \in \operatorname{supp}(\mathbf{a})\}, & t \text{ odd}.
\end{cases}
\]
Now we consider the matrix $\Psi_A^H \Psi_A \in \mathbb{C}^{|\mathcal{F}| \times |\mathcal{F}|}$, its entries are given by $$(\Psi_A^H \Psi_A)_{\chi_{\mathbf{a}}, \chi_{\mathbf{b}}} = \sum_{\mathbf{x} \in A} \overline{\chi_{\mathbf{a}}(\mathbf{x})} \chi_{\mathbf{b}}(\mathbf{x}) = \sum_{\mathbf{x} \in A} \chi_{\mathbf{b}-\mathbf{a}}(\mathbf{x}).$$
For any two distinct vectors $\chi_{\mathbf{a}}, \chi_{\mathbf{b}} \in \mathcal{F}$, the support of $\mathbf{b}-\mathbf{a}$ satisfies $0<|\operatorname{supp}(\mathbf{b}-\mathbf{a})| \le t$. Since $A$ is an orthogonal array of strength $t$, the character sum vanishes for $\mathbf{a} \neq \mathbf{b}$ and equals $|A|$ when $\mathbf{a} = \mathbf{b}$ by Lemmas \ref{OR} and \ref{oa}, respectively. Then we have
\[\sum_{\mathbf{x} \in A} \chi_{\mathbf{b}-\mathbf{a}}(\mathbf{x}) =
\begin{cases}
|A|, & \mathbf{a} = \mathbf{b}, \\
0, & \mathbf{a} \neq \mathbf{b}.
\end{cases}\]
Therefore, $\Psi_A^H \Psi_A = |A| I_{|\mathcal{F}|}$. By Lemma \ref{same nonzero eigenvalues}, $\Psi_A^H \Psi_A$ and  $\Psi_A \Psi_A^H$ possess the same non-zero eigenvalues, the non-zero eigenvalues of $\Psi_A \Psi_A^H$ are all equal to $|A|$.

$(2)$ Let $\Psi_B$ be the $|B| \times |\mathcal{F}|$ matrix formed by selecting the rows of $\Psi_A$ indexed by $B$. By definition, for any $\mathbf{x}, \mathbf{y} \in B$, the entry $(\mathbf{x}, \mathbf{y})$ of the $|B|\times|B|$ Gram matrix $\Psi_B \Psi_B^H$ is given by
$$(\Psi_B \Psi_B^H)_{\mathbf{x}, \mathbf{y}} = \sum_{\chi_{\mathbf{a}} \in \mathcal{F}} \chi_{\mathbf{a}}(\mathbf{x}) \overline{\chi_{\mathbf{a}}(\mathbf{y})}.$$
Since $A$ contains $B$, this implies that $\Psi_B \Psi_B^H$ is exactly the principal sub-matrix of $\Psi_A \Psi_A^H$ obtained by restricting both row and column indices to the subset $B$. By Lemma \ref{lem:rayleigh}, the largest eigenvalue of any Gram matrix $M$ is given by
$$\lambda_{\max}(M) = \max\{\mathbf{u}^H M \mathbf{u}:\|\mathbf{u}\|=1\}.$$
For any unit vector $\mathbf{u} \in \mathbb{C}^{|B|}$, we can define an augmented vector $\tilde{\mathbf{u}}=(\mathbf{u}, \mathbf{0}) \in \mathbb{C}^{|A|}$.
Evidently, $\|\tilde{\mathbf{u}}\| = \|\mathbf{u}\| = 1$. The corresponding quadratic form then satisfies
$$\mathbf{u}^H \Psi_B \Psi_B^H \mathbf{u}= \tilde{\mathbf{u}}^H \Psi_A \Psi_A^H \tilde{\mathbf{u}}.$$
Since $\lambda_{\max}(\Psi_A \Psi_A^H) = \max\{ \mathbf{w}^H \Psi_A \Psi_A^H \mathbf{w}:\|\mathbf{w}\|=1\}$, we have
$$\mathbf{u}^H \Psi_B \Psi_B^H \mathbf{u} = \tilde{\mathbf{u}}^H \Psi_A \Psi_A^H \tilde{\mathbf{u}} \le \lambda_{\max}(\Psi_A \Psi_A^H).$$
Taking the maximum over all unit vectors $\mathbf{u} \in \mathbb{C}^{|B|}$ on the left-hand side then yields
$$\lambda_{\max}(\Psi_B \Psi_B^H) \le \lambda_{\max}(\Psi_A \Psi_A^H) = |A|.$$
By Lemma \ref{lem:rayleigh} with the all-ones vector $\mathbf{1}_{|B|}$, we obtain
\begin{align*}\label{prime}
|A| \ge \lambda_{\max}(\Psi_B \Psi_B^H) &\ge \frac{\mathbf{1}_{|B|}^H (\Psi_B \Psi_B^H) \mathbf{1}_{|B|}}{\mathbf{1}_{|B|}^H \mathbf{1}_{|B|}}\\
&=\frac{(\Psi_B^H \mathbf{1}_{|B|})^H(\Psi_B^H\mathbf{1}_{|B|})}{\mathbf{1}_{|B|}^H \mathbf{1}_{|B|}}\\
&=\frac{\| \Psi_B^H \mathbf{1}_B \|^2}{|B|}\\
\end{align*}
\begin{align*}
&=\frac{\sum\limits_{\chi_{\mathbf{a}} \in \mathcal{F}}\left|\sum\limits_{\mathbf{x} \in B} \overline{\chi_\mathbf{a}(\mathbf{x})}\right|^2}{|B|}\\
&=\frac{\sum\limits_{\chi_{\mathbf{a}} \in \mathcal{F}}\left|\overline{\sum\limits_{\mathbf{x} \in B} \chi_\mathbf{a}(\mathbf{x})}\right|^2}{|B|}\\
&=\frac{1}{{|B|}} \sum_{\chi_{\mathbf{a}} \in \mathcal{F}} \left|\sum_{\mathbf{x} \in B} \chi_{\mathbf{a}}(\mathbf{x})\right|^2. \tag{3}
\end{align*}
Let $\chi_{\mathbf{a}} = \chi_{(a_1, \dots, a_k)} \in \mathcal{F}$, where $a_i$ denotes the $i$-th component of the index vector $\mathbf{a}$, corresponding to the $i$-th factor of the array. Since $B$ is an orthogonal array of strength $t$, its projection onto the coordinates $I = \operatorname{supp}(\mathbf{a})$ is uniform for every $|I| \le t$. Thus each row with $I$-pattern in $B_I$ appears exactly $|B| / \prod_{i \in I} r_i$ times. Consequently, we have

\begin{align*}\label{eq:Sa_square}
\sum\limits_{\mathbf{x} \in B} \chi_{\mathbf{a}}(\mathbf{x})
&= \sum_{\mathbf x \in B} \prod_{i=1}^k \omega_{s_i}^{a_i x_i} \\
&= \sum_{\mathbf x \in B} \prod_{i \in I} \omega_{s_i}^{a_i x_i}
   \qquad (\text{since } \omega_{s_i}^{0}=1) \\
&= \sum_{\mathbf y_I \in H}
   \sum_{\substack{\mathbf x \in B \\ x_i = y_i, \ i \in I}}
   \prod_{i \in I} \omega_{s_i}^{a_i y_i} \\
&= \sum_{\mathbf y_I \in H}
   \left( \prod\limits_{i \in I} \omega_{s_i}^{a_i y_i} \right)
   \cdot \frac{|B|}{\prod\limits_{i \in I} r_i} \\
&= \frac{|B|}{\prod\limits_{i \in I} r_i}
   \sum_{\mathbf y_I \in H}
   \prod_{i \in I} \omega_{s_i}^{a_i y_i} \\
&= \frac{|B|}{\prod\limits_{i \in I} r_i}
   \prod_{i \in I}
   \left( \sum_{y=0}^{r_i-1} \omega_{s_i}^{a_i y} \right),
\end{align*}
where $\omega_{s_i} = e^{2\pi i / s_i}$. Thus
\begin{align*}
\left|\sum\limits_{\mathbf{x} \in B} \chi_{\mathbf{a}}(\mathbf{x})\right|^2
= \frac{|B|^2}{\prod\limits_{i\in I} r_i^2}
\prod_{i\in I} \left|\sum_{x=0}^{r_i-1} \omega_{s_i}^{a_i x}\right|^2\tag{4}.
\end{align*}

Substituting Equation (\ref{eq:Sa_square}) into  Equation (\ref{prime}), and by Lemma \ref{lem:combined_univariate}, we obtain

\[
\begin{aligned}
|A|&\geq\frac{1}{|B|}\sum_{\chi_{\mathbf{a}} \in \mathcal{F}} \left| \sum_{\mathbf{x} \in B} \chi_{\mathbf{a}}(\mathbf{x}) \right|^2
= |B| \sum_{\chi_{\mathbf{a}} \in \mathcal{F}}
   \frac{1}{\prod\limits_{i \in \operatorname{supp}(\mathbf{a})} r_i^2}
   \prod\limits_{i \in \operatorname{supp}(\mathbf{a})} \left| \sum_{x=0}^{r_i-1} \omega_{s_i}^{a_i x} \right|^2\\
&=|B|\sum_{\substack{I \subseteq \{1,\dots,k\} \\  \chi_{\mathbf{a}} \in \mathcal{F} \text{ with } \operatorname{supp}(\mathbf{a}) = I}}
\frac{1}{\prod\limits_{i \in I} r_i^2}
\sum_{\substack{a_i \in \{1,\dots,s_i-1\}\\ i \in I}}
\prod_{i \in I} \left| \sum_{x=0}^{r_i-1} \omega_{s_i}^{a_i x} \right|^2 \\
&=|B|\sum_{\substack{I \subseteq \{1,\dots,k\} \\  \chi_{\mathbf{a}} \in \mathcal{F} \text{ with } \operatorname{supp}(\mathbf{a}) = I}}
\frac{1}{\prod\limits_{i \in I} r_i^2}
\prod_{i \in I} \left( \sum_{a_i=1}^{s_i-1} \left| \sum_{x=0}^{r_i-1} \omega_{s_i}^{a_i x} \right|^2 \right) \\
&=|B|\sum_{\substack{I \subseteq \{1,\dots,k\} \\  \chi_{\mathbf{a}} \in \mathcal{F} \text{ with } \operatorname{supp}(\mathbf{a}) = I}}
\frac{1}{\prod\limits_{i \in I} r_i^2}
\prod_{i \in I} r_i(s_i - r_i) \\
&=
|B|\sum_{\substack{I \subseteq \{1,\dots,k\} \\  \chi_{\mathbf{a}} \in \mathcal{F} \text{ with } \operatorname{supp}(\mathbf{a}) = I}}
\prod_{i \in I} \frac{s_i - r_i}{r_i} \\
&=
|B|\sum_{\substack{I \subseteq \{1,\dots,k\} \\ \exists \, \chi_{\mathbf{a}} \in \mathcal{F} \text{ with } \operatorname{supp}(\mathbf{a}) = I}}
\prod_{i \in I} \left( \frac{s_i}{r_i} - 1 \right).
\end{aligned}\]
We now evaluate $$\sum\limits_{\substack{I \subseteq \{1,\dots,k\} \\  \chi_{\mathbf{a}} \in \mathcal{F} \text{ with } \operatorname{supp}(\mathbf{a}) = I}}
\prod_{i \in I} \left( \frac{s_i}{r_i} - 1 \right)$$ over the restricted character set $\mathcal{F}$ by splitting the analysis into two cases based on the parity of $t$.

\noindent\textbf{Case 1: $t$ is even.} Note that
$\mathcal{F} = \{\chi_{\mathbf{a}} : |\operatorname{supp}(\mathbf{a})| \le \frac{t}{2}\},
$ and the admissible support sets are precisely those with $|I| \le \frac{t}{2}$. Hence, we have
\[\sum_{\substack{I \subseteq \{1,\dots,k\} \\ \chi_{\mathbf{a}} \in \mathcal{F} \text{ with } \operatorname{supp}(\mathbf{a}) = I}}
\prod_{i \in I} \left( \frac{s_i}{r_i} - 1 \right)
= \sum_{i=0}^{\frac{t}{2}} \sum_{\substack{I \subseteq \{1,\dots,k\} \\ |I|=i}} \prod_{j \in I} \left( \frac{s_j}{r_j} - 1 \right).\]

\noindent\textbf{Case 2: $t$ is odd.} Note that $\mathcal{F} =\mathcal{S'}\cup\mathcal{U}$, where
$\mathcal{S'} = \{ \chi_{\mathbf{a}} \in \widehat{G} : |\operatorname{supp}(\mathbf{a})| \le \left\lfloor \frac{t}{2}\right\rfloor \}, \
\mathcal{U} = \{ \chi_{\mathbf{a}} : |\operatorname{supp}(\mathbf{a})| = \left\lceil \frac{t}{2}\right\rceil \text{ and } 1 \in \operatorname{supp}(\mathbf{a}) \}, $ and
the admissible support sets fall into two disjoint classes.

(i) For $|I| \le \left\lfloor \frac{t}{2}\right\rfloor$. Their contribution is
\[\sum_{i=0}^{\left\lfloor \frac{t}{2}\right\rfloor} \sum_{\substack{I \subseteq \{1,\dots,k\} \\ |I|=i}} \prod_{j \in I} \left( \frac{s_j}{r_j} - 1 \right). \]

(ii) For $|I| = \left\lceil \frac{t}{2}\right\rceil$ and $1 \in I$. Write $I = \{1\} \cup J$, where $J \subseteq \{2,\dots,k\}$ and $|J| = \left\lfloor \frac{t}{2}\right\rfloor$. By the assumptions, we have $\frac{s_1}{r_1}=\max\{\frac{s_i}{r_i}:1\le i\le k\}$. The maximum value is
\[
\left( \frac{s_1}{r_1} - 1 \right)
\sum_{\substack{J \subseteq \{2,\dots,k\} \\ |J| = \left\lfloor \frac{t}{2}\right\rfloor}} \prod_{j \in J} \left( \frac{s_j}{r_j} - 1 \right).\]

Combining the two classes yields
{\small\[
\sum_{\substack{I \subseteq \{1,\dots,k\} \\ \chi_{\mathbf{a}} \in \mathcal{F} \text{ with } \operatorname{supp}(\mathbf{a}) = I}}
\prod_{i \in I} \left( \frac{s_i}{r_i} - 1 \right)
=\left[
\sum_{i=0}^{\lfloor t/2 \rfloor} \sum_{\substack{I \subseteq \{1,\dots,k\} \\ |I|=i}} \prod_{j \in I} \left( \frac{s_j}{r_j} - 1 \right)
+
\left( \frac{s_1}{r_1} - 1 \right)
\sum_{\substack{I \subseteq \{2,\dots,k\} \\ |I|=\lfloor t/2 \rfloor}} \prod_{j \in I} \left( \frac{s_j}{r_j} - 1 \right)
\right].\]}
This completes the proof.
\end{proof}

A nested orthogonal array is \emph{optimal} if it meets the bound (2) of Theorem \ref{lem:eigenvalues}.
In the symmetrical case where $s_i=s$  and $r_i=r$  for all $i$, (2) of Theorem \ref{lem:eigenvalues} is consistent with the bound given by Mukerjee, Qian, Wu in \cite{MQW}.

\begin{lem}[\!\cite{MQW}]
If there exists an NOA\(((N, M), k, (s, r), t)\) for \( 2\leq t \leq k \), then
\[
N \ge
\begin{cases}
\displaystyle M \sum_{j=0}^{\frac{t}{2}} \binom{k}{j} (\frac{s}{r} - 1)^j, & \text{if } t \ \text{is even } , \\[12pt]
\displaystyle M \left( \sum_{j=0}^{\lfloor\frac{t}{2}\rfloor} \binom{k}{j} (\frac{s}{r} - 1)^j
\;+\;
\binom{k-1}{\lfloor\frac{t}{2}\rfloor} (\frac{s}{r} - 1)^{\lfloor\frac{t}{2}\rfloor+1} \right), & \text{if } t \ \text{is odd }.
\end{cases}
\]
\end{lem}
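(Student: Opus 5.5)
The MQW bound is the specialization $s_1=\cdots=s_k=s$, $r_1=\cdots=r_k=r$ of part (2) of Theorem~\ref{lem:eigenvalues}, so I would prove it by reducing to that theorem. First, an NOA$((N,M),k,(s,r),t)$ is by definition an OA$(N,s,k,t)$ $A$ over $\mathbb{Z}_s^k$ containing an OA$(M,r,k,t)$ $B$ over $\mathbb{Z}_r^k$. Here $r<s$, and we may assume $r\ge 2$: if $r=1$ then $B$ is just $M$ copies of the zero row. The hypotheses of Theorem~\ref{lem:eigenvalues} are therefore met with $|A|=N$ and $|B|=M$. Since every ratio $s_i/r_i=s/r$, the maximizing coordinate can be taken to be coordinate $1$, so the choice of $\mathcal{T}$ in the odd case is legitimate.

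Next I would simplify the sums. For $|I|=j$, the product $\prod_{i\in I}(s_i/r_i-1)$ equals $(s/r-1)^j$, and there are $\binom{k}{j}$ such subsets $I\subseteq\{1,\dots,k\}$. Hence the even-case bound becomes $M\sum_{j=0}^{t/2}\binom{k}{j}(s/r-1)^j$.

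In the odd case the first sum simplifies in the same way. The extra term is $(s/r-1)$ times a sum over the $\binom{k-1}{\lfloor t/2\rfloor}$ subsets $I\subseteq\{2,\dots,k\}$ of size $\lfloor t/2\rfloor$, each contributing $(s/r-1)^{\lfloor t/2\rfloor}$. This gives $\binom{k-1}{\lfloor t/2\rfloor}(s/r-1)^{\lfloor t/2\rfloor+1}$, which matches the stated formula exactly.

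There is no substantive obstacle; the only care needed is in the degenerate edge cases. When $r=1$, Lemma~\ref{lem:combined_univariate} assumed $r>1$, but the identity $\bigl|\sum_{x=0}^{0}\omega^{ax}\bigr|^2=1=r(s-r)/(s-1)$ fails in general. For that case I would instead verify directly that $\sum_{x\in B}\chi_{\mathbf a}(x)=M$ for every $\mathbf a$, so that (3) gives $N\ge M|\mathcal{F}|$. This coincides with the formula at $s/r=s$, since $|\mathcal{F}|$ counts $(s-1)^{|I|}$ characters per support $I$. I would also note that $t\le k$ ensures the subsets counted in $\mathcal{T}$ exist.
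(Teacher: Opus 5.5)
Your proposal is correct and follows the paper's own route: the paper recovers the Mukerjee--Qian--Wu bound by specializing part (2) of Theorem~\ref{lem:eigenvalues} to $s_i=s$, $r_i=r$, and your count of $\binom{k}{j}$ subsets and $\binom{k-1}{\lfloor t/2\rfloor}$ subsets is the whole computation. One correction: the case $r=1$ needs no separate treatment, because the Parseval argument of Lemma~\ref{lem:combined_univariate} works unchanged there and gives $\sum_{a=1}^{s-1}1=s-1=r(s-r)$. Your claim that the identity fails is therefore mistaken, though your direct check that $\sum_{\mathbf{x}\in B}\chi_{\mathbf{a}}(\mathbf{x})=M$ is also valid.
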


Lin et al. \cite{Lin} recently derived the following bound for NOAs via the \emph{variance method}.

\begin{lem}[\!\cite{Lin}]\label{Lin}
If there exists an NOA$((N, M),k, (s_1s_2 \dots s_k, r_1r_2\dots r_k), 2)$, then
\begin{equation*}\label{eq:lin_bound}
N \ge M \left( 1 + \frac{\left( \sum\limits_{j=1}^k (1 - \frac{r_j}{s_j}) \right)^2}{\sum\limits_{j=1}^k \frac{r_j}{s_j} (1 - \frac{r_j}{s_j})} \right).
\end{equation*}
\end{lem}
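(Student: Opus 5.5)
This is the Lin et al. bound for strength 2, which the paper cites without proof. My plan is to prove it directly by a variance (second-moment) argument. Write $p_j=r_j/s_j$ and let $A$ be the $N$-run array containing the $M$-run subarray $B$.

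\textbf{Step 1: the indicator $y_j$.} For each row $\mathbf{x}\in A$, set
\[
X(\mathbf{x})=\sum_{j=1}^k y_j(\mathbf{x}),\qquad y_j(\mathbf{x})=\mathbf{1}[x_j<r_j]-p_j .
\]

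\textbf{Step 2: first and second moments over $A$.} Because $A$ has strength at least $1$, each column is uniform on $\mathbb{Z}_{s_j}$, so $\sum_{\mathbf{x}\in A}y_j(\mathbf{x})=0$. Because $A$ has strength $2$, the pair $(x_j,x_l)$ is uniform for $j\ne l$, so $\sum_{\mathbf{x}\in A}y_j(\mathbf{x})y_l(\mathbf{x})=0$. Together with $\sum_{\mathbf{x}\in A} y_j^2=Np_j(1-p_j)$, this gives
\[
\sum_{\mathbf{x}\in A}X(\mathbf{x})=0,\qquad \sum_{\mathbf{x}\in A}X(\mathbf{x})^2=N\,V,\quad V=\sum_{j=1}^k p_j(1-p_j).
\]

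\textbf{Step 3: the value of $X$ on $B$.} Every row of $B$ lies in $H$, so $y_j(\mathbf{x})=1-p_j$ for all $\mathbf{x}\in B$. Hence $X$ takes the constant value $D=\sum_{j=1}^k(1-p_j)$ on the $M$ rows of $B$.

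\textbf{Step 4: split the second moment.} Let $C=A\setminus B$ as a multiset, with $N-M$ rows. The first moment gives $\sum_{\mathbf{x}\in C}X=-MD$. By Cauchy--Schwarz on $C$,
\[
\sum_{\mathbf{x}\in C}X^2\ge \frac{M^2D^2}{N-M}.
\]
Therefore
\[
NV\ge MD^2+\frac{M^2D^2}{N-M}=\frac{NMD^2}{N-M}.
\]
This is exactly the step where the nested structure enters.

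\textbf{Step 5: rearrange.} Dividing by $N$ gives $(N-M)V\ge MD^2$, that is,
\[
N\ge M\Bigl(1+\frac{D^2}{V}\Bigr),
\]
which is the stated bound. Two edge cases remain. If $N=M$, then $C$ is empty and Step 4 instead forces $D=0$, contradicting $r_i<s_i$ for some $i$. If $V=0$, then every $p_j\in\{0,1\}$; since $r_j\ge1$ this means $r_j=s_j$ for all $j$, which is again excluded by the definition of an NOA.

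The argument is elementary. The only real point is the choice of centered indicators, which makes the $A$-moments collapse by orthogonality while being constant on $B$. Alternatively, one could reproduce it within the paper's character framework by taking $\mathcal{F}$ to be the weighted test vector $\sum_j c_j(\mathbf{1}[x_j<r_j])$ instead of $\mathbf{1}_{|B|}$ in the Rayleigh quotient of Theorem~\ref{lem:eigenvalues}.
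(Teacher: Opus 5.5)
Your proof is correct. The centred indicators $y_j$ are pairwise orthogonal over $A$ by strength $2$, and $X$ equals the constant $D$ on $B$. Cauchy--Schwarz on $C=A\setminus B$ then gives exactly $(N-M)V\ge MD^2$, and you handle both degenerate cases properly.

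The paper gives no proof of its own here. It quotes the lemma from Lin et al.\ and describes their argument as a variance method, which is presumably what you have reconstructed. Within the paper, the inequality also follows from Theorem~\ref{lem:eigenvalues}(2) at $t=2$, i.e.\ $N\ge M\bigl(1+\sum_j(s_j/r_j-1)\bigr)$, combined with the Cauchy--Schwarz estimate in the Remark following Lemma~\ref{Lin1}.

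The two routes differ in how many test functions on $A$ they use. The paper's Rayleigh-quotient argument amounts to projecting the indicator vector of $B$ onto the span of all characters of support at most $1$. Your argument in effect projects it only onto the span of $\mathbf{1}$ and the single function $X=\sum_j y_j$. That is exactly why you land on Lin's weaker bound.

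In exchange, your argument is entirely elementary. It uses only that $A$ has strength $2$ and that the rows of $B$ lie in $H$. You never need $B$ to be an orthogonal array, whereas the paper's evaluation of $\sum_{\mathbf{x}\in B}\chi_{\mathbf{a}}(\mathbf{x})$ does use this.

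Your argument also strengthens at no cost. Running it with $X_c=\sum_j c_jy_j$ gives $N\ge M(1+D_c^2/V_c)$, where $D_c=\sum_j c_j(1-p_j)$ and $V_c=\sum_j c_j^2p_j(1-p_j)$. The choice $c_j=1/p_j=s_j/r_j$ gives $D_c^2/V_c=\sum_j(s_j/r_j-1)$, which is precisely the paper's $t=2$ bound.

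Your closing ``alternatively'' sentence is muddled, though. $\mathcal{F}$ is a set of characters, not a test vector. Moreover, $\sum_j c_j\mathbf{1}[x_j<r_j]$ is constant on the rows of $B$, so as a Rayleigh test vector in $\mathbb{C}^{|B|}$ it is just a multiple of $\mathbf{1}_{|B|}$ and changes nothing. The weighting has to sit on the functions on $A$, as described above. Since your proof never uses that sentence, this does not affect its correctness.
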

\begin{lem}[\!\cite{Lin}]\label{Lin1}
If there exists an \(\mathrm{NOA}((N, M), k, (s_1s_2 \cdots s_k, r_1r_2\cdots r_k), t)\), where \(s_i \ge r_i\) with strict inequality for at least \(t-1\) \(i\)'s, then
\[N \ge \max_{\{j_1, \dots, j_{t-2}\} \subseteq \{1, \dots, k\}}
\left\{
\frac{M s_{j_1} \cdots s_{j_{t-2}}}{r_{j_1} \cdots r_{j_{t-2}}}
\left[
1 + \frac{\sum\limits_{l \in \{1, \dots, k\} \setminus \{j_1, \dots, j_{t-2}\}} (1 - r_l / s_l)^2}{\sum\limits_{l \in \{1, \dots, k\} \setminus \{j_1, \dots, j_{t-2}\}} (1 - r_l / s_l) r_l / s_l}\right]\right\}.\]

\noindent
For \(1 \le a \le t-2\), if there is only one set \(\{p_1, \dots, p_a\}\) such that \(s_{p_h} > r_{p_h}\), then
\[N \ge
\begin{cases}
\max \{X, Y + \frac{s_{p_1} \cdots s_{p_a}}{r_{p_1} \cdots r_{p_a}} M\}, & \text{if } \frac{N}{s_{p_1} \cdots s_{p_a}} \neq \frac{M}{r_{p_1} \cdots r_{p_a}}, \\[1.2em]
\max \{X, Y\}, & \text{if } \frac{N}{s_{p_1} \cdots s_{p_a}} = \frac{M}{r_{p_1} \cdots r_{p_a}},
\end{cases}\]
where
\[X = \max_{\{j_1, \dots, j_{t-2}\} \supseteq \{p_1, \dots, p_a\}}
\left\{
\frac{M s_{j_1} \cdots s_{j_{t-2}}}{r_{j_1} \cdots r_{j_{t-2}}}
\left[
1 + \frac{\sum\limits_{l \in \{1, \dots, k\} \setminus \{j_1, \dots, j_{t-2}\}} (1 - r_l / s_l)^2}{\sum\limits_{l \in \{1, \dots, k\} \setminus \{j_1, \dots, j_{t-2}\}} (1 - r_l / s_l) r_l / s_l}
\right]
\right\},
\]
\[
Y = \max_{\{j_1, \dots, j_{t-2-a}\} \cap \{p_1, \dots, p_a\}}
\left\{
s_{p_1} \cdots s_{p_a} s_{j_1} \cdots s_{j_{t-2-a}}
\left[
1 + \sum_{f \in \{1, \dots, k\} \setminus \{p_1, \dots, p_a, j_1, \dots, j_{t-2-a}\}} (s_f - 1)
\right]
\right\}.
\]
\end{lem}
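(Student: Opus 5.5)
The plan is to follow Lin et al.'s strategy of reducing strength $t$ to strength $2$ by conditioning on $t-2$ columns, and then to invoke Lemma~\ref{Lin} (and, for $Y$, Theorem~\ref{Rao bound} with $t=2$) on the resulting slices.

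\textbf{First bound.} Fix an index set $J=\{j_1,\dots,j_{t-2}\}$ and a value $\mathbf{y}\in H_J\subseteq G_J$.
\begin{enumerate}
\item Let $A_{\mathbf{y}}$ be the multiset of rows $\mathbf{x}\in A$ with $\pi_J(\mathbf{x})=\mathbf{y}$, projected onto the columns in $\{1,\dots,k\}\setminus J$. Define $B_{\mathbf{y}}$ from $B$ in the same way.
\item Since $A$ has strength $t$, $A_{\mathbf{y}}$ is an OA of strength $2$ with $N/\prod_{j\in J}s_j$ runs. Likewise $B_{\mathbf{y}}$ is an OA of strength $2$ with $M/\prod_{j\in J}r_j$ runs over the $r_l$-level alphabets, and $B_{\mathbf{y}}\subseteq A_{\mathbf{y}}$.
\item The hypothesis that $s_i>r_i$ for at least $t-1$ indices guarantees that at least one remaining column $l\notin J$ has $r_l<s_l$. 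Hence $(A_{\mathbf{y}},B_{\mathbf{y}})$ is a genuine NOA of strength $2$ on $k-t+2$ factors.
\item Apply Lemma~\ref{Lin} to this pair. Then use $\bigl(\sum_l(1-r_l/s_l)\bigr)^2\ge\sum_l(1-r_l/s_l)^2$ for nonnegative terms, which only weakens the bound to the stated form.
\item Multiply through by $\prod_{j\in J}s_j$ and take the maximum over $J$.
\end{enumerate}
The factor $\prod s_j/\prod r_j$ in front of $M$ comes exactly from the ratio of the two slice sizes.

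\textbf{Second bound, term $X$.} Suppose the only columns with $s_{p_h}>r_{p_h}$ are $P=\{p_1,\dots,p_a\}$ with $a\le t-2$. If $J\supseteq P$, every remaining column has $r_l=s_l$, so the slice pair is no longer a nested array in the sense of Lemma~\ref{Lin}. I would check that the variance argument behind Lemma~\ref{Lin} still goes through formally in this situation. When no $l\notin J$ is strict, the right-hand side has a vanishing numerator and denominator, and it must be read as the trivial slice inequality $N/\prod s_j\ge M/\prod r_j$. This yields $X$.

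\textbf{Second bound, term $Y$.} Condition on $P$ together with $t-2-a$ further columns $j_1,\dots,j_{t-2-a}$ disjoint from $P$. Each resulting slice of $A$ is an OA of strength $2$ on the remaining factors, so Theorem~\ref{Rao bound} with $t=2$ gives at least $1+\sum_f(s_f-1)$ runs per slice. Multiplying by the number of slices $s_{p_1}\cdots s_{p_a}s_{j_1}\cdots s_{j_{t-2-a}}$ gives $Y$.

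\textbf{The extra $M$ term.} This step is the main obstacle. The idea is to fix $\mathbf{z}\in H_P$ and compare the $A$-slice, of size $N/\prod s_p$, with the $B$-slice, of size $M/\prod r_p$, which it contains. Both are OAs of strength $t-a$ over the same (unshrunk) levels on the other columns. If the sizes differ, the complement (the $A$-slice minus the $B$-slice) is itself a nonempty OA of strength $t-a\ge2$. I would then apply the Rao bound to this complement rather than to the whole slice, and add back the $B$-part. Summed over the $\prod s_p$ slices, the $B$-part contributes $\frac{\prod s_p}{\prod r_p}M$.

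The delicate point is making the slice bookkeeping match the exact form of $Y$, with the product over $s$ rather than over $s-r$. I expect to adjust which slices are counted, possibly only those indexed by $H_P$, and to verify the inequality carefully.
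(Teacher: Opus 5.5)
Your argument is correct in substance. It follows the column-deletion reduction to strength $2$ that the paper attributes to Lin et al.; the paper quotes this lemma without proof, so that description is the only comparison available. Slicing on $J$, applying Lemma~\ref{Lin} to the slice pair, and weakening via $\bigl(\sum_l(1-r_l/s_l)\bigr)^2\ge\sum_l(1-r_l/s_l)^2$ gives the first bound. The strength-$2$ Rao bound on the slices of $A$ gives $N\ge Y$.

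For $X$: with $J\supseteq P$, as literally written, every bracket is $1+0/0$, so your reading gives $X=\frac{\prod_P s_p}{\prod_P r_p}M$. If the intended condition is $J\not\supseteq P$, your first-bound argument applies verbatim, because some $p\in P\setminus J$ is strict. Either way $X$ is covered.

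The one step that fails as written is the extra $M$ term. Summing over all $\prod_P s_p$ slices does not produce $\frac{\prod_P s_p}{\prod_P r_p}M$. Only the $\prod_P r_p$ slices indexed by $H_P$ contain rows of $B$, so summation gives only $N\ge M+Y$.

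The fix is to use uniformity instead of summation:
\begin{enumerate}
\item Every slice $A_{\mathbf z}$, $\mathbf z\in G_P$, has exactly $N/\prod_P s_p$ rows, so a single $\mathbf z\in H_P$ suffices. Write $N/\prod_P s_p=M/\prod_P r_p+|C_{\mathbf z}|$, where $C_{\mathbf z}=A_{\mathbf z}\setminus B_{\mathbf z}$. This is a nonempty OA of strength $t-a\ge2$ over the levels $s_l=r_l$, $l\notin P$.
\item Run your $Y$-argument on $C_{\mathbf z}$ instead of $A_{\mathbf z}$: condition on $J'$, with $J'\cap P=\emptyset$ and $|J'|=t-2-a$. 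Each sub-slice has $|C_{\mathbf z}|/\prod_{J'}s_j\ge1$ rows and is a strength-$2$ OA. Hence $|C_{\mathbf z}|\ge\prod_{J'}s_j\,[1+\sum_f(s_f-1)]$.
\item Multiplying by $\prod_P s_p$ gives exactly $N\ge Y+\frac{\prod_P s_p}{\prod_P r_p}M$, with no $s-r$ factors.
\end{enumerate}
Use this $J'$-conditioning rather than the full strength-$(t-a)$ Rao bound on $C_{\mathbf z}$. That bound does not in general dominate the form of $Y$.
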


\begin{rmk}
Applying the Cauchy--Schwarz inequality
\[
\left( \sum_{j=1}^k a_j b_j \right)^2
\le \left( \sum_{j=1}^k a_j^2 \right) \left( \sum_{j=1}^k b_j^2 \right)
\]
with $a_j = \sqrt{q_j(1-q_j)}$ and $b_j = \sqrt{(1-q_j)/q_j}$  yields
\[
\left( \sum_{j=1}^k (1 - q_j) \right)^2
\le \left( \sum_{j=1}^k q_j(1-q_j) \right) \left( \sum_{j=1}^k \frac{1-q_j}{q_j} \right).
\]
Then
\[\frac{\left( \sum\limits_{j=1}^k (1 - q_j) \right)^2}{\sum\limits_{j=1}^k q_j (1 - q_j)} \le \sum_{j=1}^k \left( \frac{1}{q_j} - 1 \right).\]
Let $q_j = \frac{r_j }{s_j}$, we have
\[
\frac{\left( \sum\limits_{j=1}^k (1 - \frac{r_j }{s_j}) \right)^2}{\sum\limits_{j=1}^k \frac{r_j }{s_j} (1 - \frac{r_j }{s_j})} \le \sum_{j=1}^k \left( \frac{s_j }{r_j} - 1 \right).
\]
Thus
\[M \left( 1 + \frac{\left( \sum\limits_{j=1}^k (1 - \frac{r_j }{s_j}) \right)^2}{\sum\limits_{j=1}^k \frac{r_j }{s_j} (1 - \frac{r_j }{s_j})}\right)\le
M \left( 1 + \sum\limits_{j=1}^k \left( \frac{s_j}{r_j} - 1 \right) \right).
\]
Clearly, the expression on the left is given by Lemma~\ref{Lin}, and the expression on the right can be obtained from Theorem~\ref{lem:eigenvalues} with $t=2$.  Note that for $t \geq3$, Lin et al. \cite{Lin} employed the deleting column technique to reduce the problem to $t=2$ setting.
Therefore, our bound exhibits substantial improvements over Lin's bound, and the resulting formulas are both concise and elegant.
\end{rmk}

\begin{exam}
Consider the NOA$\bigl((31104,1536),26,(2^{15}3^{11},2^{26}),4\bigr)$ given in Example 1 of \cite{Pang2024-SPL}. Here $k=26,\ t=4,\ |A|=31104,\ |B|=1536$, and
\[
\frac{s_i}{r_i} =
\begin{cases}
1, & 1 \le i \le 15, \\[0.3em]
\dfrac{3}{2}, & 16 \le i \le 26.
\end{cases}
\]

We first apply Lemma~\ref{Lin1}. Since $t=4$, we select $t-2=2$ indices $\{j_1, j_2\}$. Let $m \in \{0,1,2\}$ be the number of chosen indices with $s_i/r_i = 3/2$. Then
\begin{center}
$
\frac{s_{j_1}s_{j_2}}{r_{j_1}r_{j_2}} = \left(\frac{3}{2}\right)^m
$
and
$
1-\frac{r_i}{s_i}=
\begin{cases}
0,&1 \le i \le 15,\\[0.3em]
\dfrac13,&16 \le i \le 26.
\end{cases}
$
\end{center}
For a choice with $m$ ternary columns selected, the remaining number of ternary columns is $11-m$, hence
\[
\frac{\sum_{l\notin\{j_1,j_2\}} (1-r_l/s_l)^2}{\sum_{l\notin\{j_1,j_2\}} (1-r_l/s_l)(r_l/s_l)}
= \frac{(11-m)/9}{2(11-m)/9} = \frac{1}{2}.
\]
Maximizing over $m=2$, Lemma~\ref{Lin1} gives
\[
|A| \ge 1536 \times \left(\frac{3}{2}\right)^2 \times \left(1 + \frac{1}{2}\right) = 5184.
\]
Next, applying the Theorem \ref{lem:eigenvalues}, since $t=4$ is even,
\begin{align*}
|A|&\ge |B|\sum_{i=0}^{2}
\sum_{\substack{I\subseteq\{1,\ldots,26\}\\ |I|=i}}
\prod_{j\in I}\left(\frac{s_j}{r_j}-1\right)\\
&=1536\cdot\left(1+\binom{11}{1}\frac12+\binom{11}{2}\frac14\right)\\
&=1536\cdot\frac{81}{4}\\&=31104.
\end{align*}
In summary,
\[
|A|=31104=L_{Thm4.5}>L_{Lin}=5184.
\]
\end{exam}

The following two constructions are provided to show that the lower bounds derived in Theorem \ref{lem:eigenvalues} are indeed attainable and optimal.

\begin{lem}[\!\cite{Brouwer2006}]\label{M} If there exists an OA$(N,a_1a_2\dots a_k,t)$, then there exists an OA$(nN,(na_1)a_2\\ \dots a_k,t)$.
\end{lem}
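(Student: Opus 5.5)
Since $\mathbb{Z}_{na_1}\cong \mathbb{Z}_n\times\mathbb{Z}_{a_1}$ as sets via $(u,x)\mapsto ua_1+x$, the plan is to expand the first factor directly. Let $A$ be an OA$(N,a_1a_2\cdots a_k,t)$ with rows $\mathbf{x}=(x_1,\dots,x_k)$. Define the multiset $A'$ of size $nN$ by taking every row $\mathbf{x}\in A$ together with every $u\in\{0,1,\dots,n-1\}$, and forming the row
\[
\phi_u(\mathbf{x})=(ua_1+x_1,\,x_2,\dots,x_k)\in \mathbb{Z}_{na_1}\times\mathbb{Z}_{a_2}\times\cdots\times\mathbb{Z}_{a_k}.
\]
In other words, $A'$ consists of $n$ stacked copies of $A$ in which the first column of the $u$-th copy is shifted by $ua_1$. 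The claim is that $A'$ has strength $t$.

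For uniformity, fix an index set $I$ with $|I|=t$ (or $|I|\le t$). If $1\notin I$, the projection $\pi_I(A')$ is just $n$ copies of $\pi_I(A)$, so each $\beta\in G_I$ occurs $n\lambda_I=nN/\prod_{j\in I}a_j$ times, as required. If $1\in I$, let $\beta=(y_1,\beta')$ with $y_1\in\mathbb{Z}_{na_1}$. Write $y_1=ua_1+x_1$ uniquely with $0\le x_1<a_1$ and $0\le u<n$. A row of $A'$ projects to $\beta$ exactly when it comes from copy $u$ and from a row of $A$ whose $I$-projection is $(x_1,\beta')$. Hence the count is $\lambda_I(A)=N/\prod_{j\in I}a_j$, which equals $nN/\bigl(na_1\prod_{j\in I\setminus\{1\}}a_j\bigr)$, the required constant.

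Alternatively, the same verification can be carried out through Lemma~\ref{oa}: the character sum over $A'$ factors into a sum over $u$ of $e^{2\pi i m_1 u/n}$ times a character sum over $A$. When $n\nmid m_1$ this vanishes through the $u$-sum; otherwise it reduces to a character of $A$ with the same support. Either route works, and the direct counting argument is simpler.

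The only step needing care is the bijectivity of the decomposition $y_1=ua_1+x_1$, which guarantees that each level of the enlarged factor arises from exactly one pair $(u,x_1)$. Beyond that the argument is routine, so I expect no serious obstacle.
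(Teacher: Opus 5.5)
Your proof is correct. The bijection $(u,x_1)\mapsto ua_1+x_1$ from $\{0,\dots,n-1\}\times\{0,\dots,a_1-1\}$ onto $\{0,\dots,na_1-1\}$ handles the case $1\in I$: it gives exactly $N/\prod_{j\in I}a_j=nN/\bigl(na_1\prod_{j\in I\setminus\{1\}}a_j\bigr)$ rows over each $\beta$. The case $1\notin I$ is immediate.

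Your character-sum variant is also sound. When $n\mid m_1$, write $m_1=nm_1'$; the factor $e^{2\pi i m_1x_1/(na_1)}$ becomes the character of $\mathbb{Z}_{a_1}$ with index $m_1'$, which is nonzero exactly when $m_1\neq 0$, so the support is preserved.

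The paper gives no proof of this lemma; it only cites Brouwer et al. Your argument is the standard $n$-copies, level-expansion construction behind that citation. Your explicit construction also justifies a claim the paper later uses without comment in Theorem~\ref{uM}: the new array contains the original one as a subarray with first-column levels in $\{0,\dots,a_1-1\}$, namely the copy $u=0$.
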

\begin{thm}\label{uM}
Let $n\in \mathbb{Z}_+$ and $t\geq 2$. If there is an OA$(M, s_1 s_2 \cdots s_k, t)$, then there is an optimal
NOA\(\big((nM, M), k, ((ns_1) s_2 \dots s_k, s_1 s_2 \dots s_k), t\big).
\)
\end{thm}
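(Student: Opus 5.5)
The construction will be explicit: take $n$ copies of the given array $B$, one for each shift of the first coordinate by a multiple of $s_1$.

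\textbf{Construction.} Let $B$ be the given OA$(M,s_1s_2\cdots s_k,t)$ over $H=\mathbb{Z}_{s_1}\times\cdots\times\mathbb{Z}_{s_k}$. Define
\[
A=\bigcup_{j=0}^{n-1}\{(x_1+js_1,x_2,\dots,x_k):(x_1,\dots,x_k)\in B\},
\]
viewed as a multiset over $G=\mathbb{Z}_{ns_1}\times\mathbb{Z}_{s_2}\times\cdots\times\mathbb{Z}_{s_k}$.

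Two facts about $A$ are immediate:
\begin{itemize}
\item The $j=0$ copy is exactly $B$, with entries in $\{0,\dots,s_1-1\}\subseteq\mathbb{Z}_{ns_1}$ in the first coordinate, so $A$ contains $B$ and $set(B)\subseteq H$.
\item $|A|=nM$.
\end{itemize}

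\textbf{$A$ has strength $t$.} This is the content of Lemma~\ref{M}, where this is precisely the standard replacement construction. I would also verify it directly. Take $I$ with $|I|=t$.
\begin{itemize}
\item If $1\notin I$, then $\pi_I(A)$ is $n$ copies of $\pi_I(B)$, hence uniform with index $nM/\prod_{i\in I}s_i$.
\item If $1\in I$, write each $y_1\in\mathbb{Z}_{ns_1}$ uniquely as $y_1=x_1+js_1$ with $0\le x_1<s_1$ and $0\le j<n$. The count of rows of $A$ projecting to $(y_1,\mathbf y')$ equals the count of rows of $B$ projecting to $(x_1,\mathbf y')$. That count is $M/\prod_{i\in I}s_i=nM/(ns_1\prod_{i\in I\setminus\{1\}}s_i)$, so the projection is uniform.
\end{itemize}
Alternatively, Lemma~\ref{oa} gives the same conclusion via character sums. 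Hence $A$ is an NOA$((nM,M),k,((ns_1)s_2\cdots s_k,s_1\cdots s_k),t)$. For $n\ge2$ the first level strictly increases, as the definition requires.

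\textbf{Optimality.} The ratios in Theorem~\ref{lem:eigenvalues}(2) are $s_1'/r_1=ns_1/s_1=n$ and $s_j'/r_j=1$ for $j\ge2$. So $s_1'/r_1$ is the maximum, as the theorem requires, and every factor $\frac{s_j}{r_j}-1$ vanishes for $j\ge2$.
\begin{itemize}
\item \emph{$t$ even.} The only surviving index sets in the bound are $I=\emptyset$ and $I=\{1\}$; the latter is allowed since $t/2\ge1$. The bound becomes $M(1+(n-1))=nM$.
\item \emph{$t$ odd ($t\ge3$).} The first sum again gives $1+(n-1)$, since $\lfloor t/2\rfloor\ge1$. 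The second term is $(n-1)\sum_{J\subseteq\{2,\dots,k\},|J|=\lfloor t/2\rfloor}\prod_{j\in J}0=0$, because every such $J$ is nonempty. The bound is again $nM$.
\end{itemize}
Thus $|A|=nM$ meets the bound and the NOA is optimal.

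The only delicate points are the index bookkeeping in the uniformity check when $1\in I$, and confirming in the odd case that the extra term vanishes. The latter needs $\lfloor t/2\rfloor\ge1$, which is exactly where the hypothesis $t\ge2$ enters.
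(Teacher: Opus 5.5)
Your proof is correct and follows the paper's route. Like the paper, you build the larger array by the replacement construction behind Lemma~\ref{M}, then evaluate the bound of Theorem~\ref{lem:eigenvalues} with ratios $n$ and $1$ to get exactly $nM$ in both parity cases. Your explicit shift $x_1\mapsto x_1+js_1$ is a worthwhile refinement: it verifies that $A$ contains $B$ with $set(B)\subseteq H$, which the paper only asserts as ``clear,'' and your remark that $n\ge 2$ is needed for a genuine NOA is apt.
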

\begin{proof}
By assumption, there exists an OA$(M, s_1 s_2 \cdots s_k, t)$. Applying Lemma \ref{M} yields an OA$(nM, (ns_1) s_2 \cdots s_k, t)$, which clearly contains the original OA as a subarray and therefore forms an NOA$((nM, M), k, ((ns_1)s_2 \dots s_k, s_1 s_2\dots s_k), t)$.

We establish the optimality of the constructed NOA by showing that it attains the lower bound derived in Theorem~\ref{lem:eigenvalues} exactly. For the NOA$\big((nM, M), k, ((ns_1)s_2\cdots s_k,\ s_1s_2\cdots s_k), t\big)$, we have
$$\frac{ns_1}{s_1}=n, \ \ \ \frac{s_i}{s_i}=1 \ \ \text{for} \ \ i\ge 2.$$
We now evaluate the bound in Theorem~\ref{lem:eigenvalues} with these parameters. Since $\frac{s_i}{s_i}-1=0$ for all $i\ge 2$, only the index $i=1$ contributes to the sums in the lower bound.

\textbf{Case 1: $t$ is even.} The lower bound therefore simplifies to
\[|A| \ge M\sum_{|I|\le t/2}\prod_{i\in I}\left(\frac{s_i}{r_i}-1\right)
= M\bigl(1+(n-1)\bigr)=nM.\]
Note that $\sum\limits_{|I|=\emptyset}\prod\limits_{i\in I}\left(\frac{s_i}{r_i}-1\right)=1$.

\textbf{Case 2: $t$ is odd.} We have $\lfloor t/2\rfloor\ge 1$. Consequently, the lower bound reduces to
\begin{align*}
|A| &\ge M\left(
\sum_{|I|\le \lfloor t/2\rfloor}\prod_{i\in I}\left(\frac{s_i}{r_i}-1\right)
+(u-1)\sum_{\substack{|I|=\lfloor t/2\rfloor\\1\notin I}}
\prod_{i\in I}\left(\frac{s_i}{r_i}-1\right)
\right)\\
&=M(1+(n-1)+0)=nM.
\end{align*}

In both cases, the lower bound reduces to $|A| \ge nM$. Since the proposed construction yields an array with exactly $nM$ runs, the bound is attained, thereby proving the optimality of the NOA.
\end{proof}

The following is the juxtaposition construction of orthogonal arrays.

\begin{lem}[\!\cite{Chen}]\label{chen}
If there exists an OA$(N_1, a_1a_2  \cdots a_k, t) $ and an OA$(N_2,b_1a_2  \cdots a_k, t) $ satisfying $ \frac {N_1} {a_1}= \frac {N_2} {b_1} $, then there exists an OA$(N_1+N_2, (a_1+b_1) a_2  \cdots a_k, t)$.
\end{lem}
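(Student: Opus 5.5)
The lemma is the classical juxtaposition construction. I would prove it directly: stack the two arrays vertically after relabelling the first factor of the second array. Concretely, take $A_1$, an OA$(N_1,a_1a_2\cdots a_k,t)$ over $\mathbb{Z}_{a_1}\times G'$, and $A_2$, an OA$(N_2,b_1a_2\cdots a_k,t)$ over $\mathbb{Z}_{b_1}\times G'$, where $G'=\mathbb{Z}_{a_2}\times\cdots\times\mathbb{Z}_{a_k}$. Form $A_2'$ by replacing each first coordinate $x_1\in\{0,\dots,b_1-1\}$ with $x_1+a_1$, so that its first-column entries lie in $\{a_1,\dots,a_1+b_1-1\}$. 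Let $A$ be the multiset union $A_1\cup A_2'$, viewed as a multiset in $\mathbb{Z}_{a_1+b_1}\times G'$. Then $|A|=N_1+N_2$, and it remains to check strength $t$.

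I would verify uniformity directly from the definition rather than through Lemma~\ref{oa}. The characters of $\mathbb{Z}_{a_1+b_1}$ do not interact well with the split of the first column, so a counting argument is cleaner. Fix $I\subseteq\{1,\dots,k\}$ with $|I|=t$ and some $\beta\in G_I$ (with first factor $\mathbb{Z}_{a_1+b_1}$ if $1\in I$).

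\textbf{Case $1\notin I$.} The projection $\pi_I(A)$ is the union of $\pi_I(A_1)$ and $\pi_I(A_2)$, since the relabelling only touches column 1. Each $\beta$ therefore occurs $N_1/\prod_{j\in I}a_j+N_2/\prod_{j\in I}a_j$ times, a constant.

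\textbf{Case $1\in I$.} Write $I=\{1\}\cup J$. If $\beta_1<a_1$, the occurrences of $\beta$ all come from $A_1$, and there are $N_1/(a_1\prod_{j\in J}a_j)$ of them. If $\beta_1\ge a_1$, they all come from $A_2'$, and there are $N_2/(b_1\prod_{j\in J}a_j)$ of them. The hypothesis $N_1/a_1=N_2/b_1$ makes these two counts equal, so the projection is uniform with $\lambda_I=(N_1+N_2)/\big((a_1+b_1)\prod_{j\in J}a_j\big)$.

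If $t<k$, strength for every $|I|=t$ is all that the definition requires, so the argument is complete. The only step needing care is the second case, which is exactly where the divisibility hypothesis enters; everything else is bookkeeping.
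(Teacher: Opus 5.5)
Your proof is correct and complete. The paper gives no proof of this lemma; it only cites \cite{Chen}. Your direct count is the standard juxtaposition argument.
\begin{itemize}
\item For $1\notin I$, the two blocks together put $(N_1+N_2)/\prod_{j\in I}a_j$ rows in every cell.
\item For $1\in I$, shifting the first column of $A_2$ by $a_1$ makes the two blocks use disjoint first-coordinate symbols. The hypothesis $N_1/a_1=N_2/b_1$ is then exactly what makes the two cell counts equal.
\end{itemize}

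Two small remarks. First, the qualifier \emph{if $t<k$} in your last paragraph is unnecessary. When $t=k$ the only index set is $\{1,\dots,k\}$, and your second case already covers it.

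Second, keep your choice to shift $A_2$ rather than $A_1$. It leaves $A_1$ on the symbols $\{0,\dots,a_1-1\}$, which the nested definition ($set(B)\subseteq H$) requires. That matters in Theorem~\ref{thm:noa_construction}, where the paper simply stacks $A_1$ over $A_2$ without mentioning any relabelling.
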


The following simple yet powerful result follows directly from Lemma \ref{chen} and Theorem \ref{lem:eigenvalues}.
\begin{thm} \label{thm:noa_construction}
If there exists an OA$(N_1, a_1s_2 \cdots s_k, t)$ and an OA$(N_2, b_1s_2 \cdots s_k, t)$ satisfying  $\frac{N_1}{a_1}=\frac{N_2}{b_1}$, then there exists an optimal
NOA$((N_1 + N_2, N_1),k, ((a_1 + b_1) s_2\cdots s_k, b_1s_2 \cdots s_k), t).$

\end{thm}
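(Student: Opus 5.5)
The plan is to build the array explicitly with the juxtaposition construction of Lemma~\ref{chen}, identify the nested subarray, and then check that Theorem~\ref{lem:eigenvalues}(2) gives exactly $N_1+N_2$. The argument follows the proof of Theorem~\ref{uM} closely.

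\emph{Construction.} Let $A_1$ be the OA$(N_1, a_1s_2\cdots s_k, t)$ on symbols $\{0,\dots,a_1-1\}$ in the first factor. Let $A_2$ be the OA$(N_2, b_1s_2\cdots s_k, t)$, with the first-factor symbol $x$ of every run replaced by $a_1+x$, so that $A_2$ uses $\{a_1,\dots,a_1+b_1-1\}$. A bijective relabelling of one column's symbols does not change strength $t$. Since $N_1/a_1=N_2/b_1$, Lemma~\ref{chen} says that the stacked array $A=\binom{A_1}{A_2}$ is an OA$(N_1+N_2,(a_1+b_1)s_2\cdots s_k,t)$ over $\mathbb{Z}_{a_1+b_1}\times\mathbb{Z}_{s_2}\times\cdots\times\mathbb{Z}_{s_k}$.

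\emph{Nested subarray.} The subarray $A_1$ is an OA of strength $t$ with $set(A_1)\subseteq H=\mathbb{Z}_{a_1}\times\mathbb{Z}_{s_2}\times\cdots\times\mathbb{Z}_{s_k}$, and $a_1<a_1+b_1$. So $A$ is an NOA whose small array has $N_1$ runs and first-factor level $a_1$. The statement lists the small array's first level as $b_1$. Since only $N_2$ runs carry $b_1$ levels, I read this as a labelling slip: the run size $N_1$ goes with level $a_1$, and symmetrically run size $N_2$ goes with level $b_1$. I would state this explicitly and carry out the proof with the pairing $(N_1,a_1)$; the other pairing is identical after swapping the roles of $A_1$ and $A_2$.

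\emph{Optimality.} Apply Theorem~\ref{lem:eigenvalues}(2) with
\[
\frac{s_1}{r_1}=\frac{a_1+b_1}{a_1}>1,\qquad \frac{s_i}{r_i}=1 \ \ (i\ge2).
\]
The first ratio is therefore the maximum, as the theorem requires. Every product $\prod_{j\in I}(s_j/r_j-1)$ vanishes unless $I\subseteq\{1\}$.

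For $t$ even, the bound becomes $N_1\bigl(1+\tfrac{b_1}{a_1}\bigr)$. For $t$ odd, the extra term sums over $I\subseteq\{2,\dots,k\}$ with $|I|=\lfloor t/2\rfloor\ge1$, so it is zero and the bound is again $N_1\bigl(1+\tfrac{b_1}{a_1}\bigr)$.

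Using $N_2=N_1b_1/a_1$, this equals $N_1+N_2=|A|$. Hence the bound is attained and the NOA is optimal.

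\emph{Main obstacle.} The only delicate point is getting the parameter bookkeeping right. One must make sure the symbol shift puts $set(A_1)$ inside $H$, and that the subarray's run size and level are paired consistently, because the equality $N_1+N_2=N_1(a_1+b_1)/a_1$ depends on that pairing. Everything else is the same computation as in Theorem~\ref{uM}.
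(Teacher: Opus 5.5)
Your proposal is correct and follows the paper's proof essentially step for step: stack the two arrays via Lemma~\ref{chen}, then evaluate Theorem~\ref{lem:eigenvalues}(2) with $s_1/r_1=(a_1+b_1)/a_1$ and all other ratios equal to $1$, which gives $N_1(a_1+b_1)/a_1=N_1+N_2$. Your reading of the small array's first level as $a_1$ rather than $b_1$ agrees with the paper's own proof, which takes $r_1=a_1$ and $M=N_1$, and your explicit symbol shift of $A_2$ supplies a detail the paper leaves implicit.
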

\begin{proof}
Let $A_1$ and $A_2$ be an OA$(N_1, a_1s_2 \cdots s_k, t)$  and an OA$(N_2, b_1s_2 \cdots s_k, t)$, respectively.
Let $B = \begin{pmatrix} A_1 \\ A_2 \end{pmatrix}$. It is easy to verify that $B$ is an NOA$((N_1 + N_2, N_1),k, ((a_1 + b_1)s_2 \cdots s_k, b_1s_2 \cdots s_k), t)$ by Lemma \ref{chen}.

We show that the construction achieves the spectral bound in Theorem~\ref{lem:eigenvalues}. In our setting, for the first factor, the numbers of levels in the larger and smaller orthogonal arrays are $s_1 = a_1 + b_1$ and $r_1 = a_1$, respectively. For all other factors $i \in \{2, \dots, k\}$, we have $s_i = r_i$, which implies $\frac{s_i}{r_i} - 1 = 0$.

For both $t = 2u$ and $t = 2u+1$, the lower bound in Theorem~\ref{lem:eigenvalues} reduces to the same expression, since the enhancement term vanishes for odd strength: for any $1 \notin I$ with $|I| = u \ge 1$, we have $\prod\limits_{i \in I} \bigl( \frac{s_i}{r_i} - 1 \bigr) = 0$. Hence,
\[
|A| \ge M \sum_{|I| \le u} \prod_{i \in I} \left(\frac{s_i}{r_i} - 1\right)= M \left[1+( \frac{s_1}{r_1}-1)\right] =  M \frac{s_1}{r_1}.
\]
Substituting $M = N_1$, $s_1 = a_1 + b_1$, $r_1 = a_1$ and using the construction condition $\frac{N_1}{a_1} = \frac{N_2}{b_1}$ (i.e., $N_2 = N_1 \frac{b_1}{a_1}$), we obtain
\[
|A| \ge N_1 + N_1 \frac{b_1}{a_1} = N_1 + N_2.
\]
Thus, the total run size of the constructed nested orthogonal array exactly attains the theoretical lower bound, confirming the optimality of the construction for any strength $t$ under the single-factor asymmetry assumption.
\end{proof}

\section{Conclusions}

In this paper, we first present a novel proof of Rao's bound. The improvement of the corresponding upper bound remains a worthy direction for further exploration. Second, we generalize the new lower bound for nested orthogonal arrays originally established by Mukerjee, Qian, and Wu \cite{MQW} propose two simple construction methods that achieve this bound. Improving the upper and lower bounds for NOAs and constructing optimal NOAs will be the focus of our future research.

\newpage
\vskip 12pt
\noindent
\textbf{Declaration of competing interest}

The authors declare that they have no known competing financial interests or personal relationships that could have appeared to influence the work reported in this paper.

\vskip 12pt
\noindent
\textbf{ Data availability}

No data was used for the research described in the article.

\vskip 12pt

\noindent
\textbf{Funding}
The work was funded by Science Research Project of Hebei Education Department (Grant NO. JCZX2026032), the Natural Science Foundation of Hebei Province (Grant NO. A2025205023). This work was
also supported by the Natural
Science Foundation of Henan Province (Grant No. 262300421846)  and the National Natural Science Foundation of China (Grant No. 62372157).

\end{document}